\newtheorem{theorem}{Theorem}
\newtheorem{corollary}[theorem]{Corollary}
\newtheorem{conjecture}[theorem]{Conjecture}
\newtheorem{lemma}[theorem]{Lemma}
\newtheorem{example}[theorem]{Example}
\newenvironment {proof} {{\it Proof.}}{\hspace*{\fill}$\Box$\par\vspace{4mm}}
\begin{document}

\bibliographystyle{plain}

\title{Some Problems on the Extremal Energy of Integral Weighted Graphs}
\title{On the Extremal Energy of Integral Weighted Graphs}

 \author{Richard A. Brualdi\\
 Department of Mathematics\\
 University of Wisconsin\\
 Madison, WI 53706 USA\\
{\tt brualdi@math.wisc.edu}
 \and
 Jia-Yu Shao\thanks{Research
supported by the National Science Foundation of China (10731040).}
\\
 Department of Mathematics\\
 Tongji University\\
 Shanghai, China\\
 {\tt jyshao@sh163.net}
 \and
 \medskip
Shi-Cai Gong,
 Chang-Qing Xu\thanks{Research supported by the National Science Foundation of China (10871230).},
   Guang-Hui Xu\thanks{Research supported by the Zhejiang Provincial Natural Science Foundation of China (Y7080364)}\\
Department of Mathematics\\
 Zhejiang A \& F University\\
 Lin'An, Hangzhou 311300 China\\
 {\tt \{scgong,cqxurichard,ghxu\}@zafu.edu.cn}\\
 }

\maketitle

 \begin{abstract}
Let  ${\mathcal T}(n,m)$ and  ${\mathcal F}(n,m)$ denote the classes of weighted trees and forests, respectively, of order $n$ with the positive integral weights and the fixed total weight sum $m$, respectively. In this paper, we determine the minimum energies for both the classes  ${\mathcal T}(n,m)$ and  ${\mathcal F}(n,m)$. We also determine the maximum energy for the class  ${\mathcal F}(n,m)$. In all cases, we characterize the weighted graphs whose energies reach these extremal values.  We also solve the similar maximum energy and minimum energy problems for the classes of (0,1) weighted trees and forests.

\vskip 0.3cm

\noindent {\bf Key words:  Energy, graph, weighted graph, integral weights, tree, forest.}
 
 \smallskip
\noindent {\bf AMS subject classifications:  05C50, 05D99, 15A18} 
\end{abstract}

\section{Introduction}

We consider trees and forests on $n$ vertices in which each edge is assigned
a positive integral weight. Such weighted graphs can be regarded as graphs with multiple edges, or {\it multitrees and multiforests}. We assume that the sum of the weights is equal to a specified integer $m$, that is, the total number of edges in the multigraph equals $m$. We let ${\mathcal T}(n,m)$ denote the collection of such multitrees on $n$ vertices with total number of edges equal to $m$. The set of multiforests on $n$ vertices with total number of edges equal to $m$
is similarly denoted by ${\mathcal F}(n,m)$.

In general, the {\it energy} of a multigraph $G$ on $n$ vertices is defined to be
\[\mathbb{E}(G)= \sum_{i=1}^n |\lambda_i|\]
where $\lambda_1,\lambda_2,\ldots,\lambda_n$ are the $n$ (real) eigenvalues of the (nonnegative, integral, symmetric) adjacency matrix $A$ of $G$. Note that, since $A$ has only zeros on its main diagonal, the trace of $A$ equals $0$ and hence ${\rm tr}(A)=\sum_{i=1}^n\lambda_i=0$.  More information on graph eigenvalues can be found in \cite{DMH,CRS}.

 Let
\[\overline{\mathbb{E}}(n,m)=\max\{\mathbb{E}(T): T\in  {\mathcal T}(n,m)\}\]
be the maximum energy of a tree in ${\mathcal T}(n,m)$, and let
\[\widetilde{\mathbb{E}}(n,m)=\min\{\mathbb{E}(T): T\in  {\mathcal T}(n,m)\}\]
be the minimum energy of a tree in ${\mathcal T}(n,m)$.

We  also use the similar notations $\overline{\mathbb{E}}_{F}(n,m)$ and $\widetilde{\mathbb{E}}_{F}(n,m)$ for the class ${\mathcal F}(n,m)$ of multiforests.

\vskip 0.5cm

We consider the following problems concerning the extremal energies of positive integral weighted trees and forests.

\begin{enumerate}
\item[\rm (1)] For a given $m$, determine $\overline{\mathbb{E}}(n,m)$ and the trees in ${\mathcal T}(n,m)$ with this maximum energy. Similar problems can be considered for the class ${\mathcal F}(n,m)$ of weighted forests.
\item[\rm (2)]  For a given $m$, determine $\widetilde{\mathbb{E}}(n,m)$ and the trees in ${\mathcal T}(n,m)$ with this minimum energy. Similar problems can be considered for the class ${\mathcal F}(n,m)$ of weighted forests.
\end{enumerate}


We also consider some subclasses of ${\mathcal T}(n,m)$ or  ${\mathcal F}(n,m)$. For example, we can fix the graph $T$, or fix the weight sequence (in the sense of non-increasing order), or fix both the graph and the weight sequence but the distribution of these weights on the edges can vary. In each of the subclasses, we can consider the corresponding maximum energy problem and the minimum energy problem. 


In this paper, we solve the minimum energy problems for both the classes  ${\mathcal T}(n,m)$ and  ${\mathcal F}(n,m)$. We also solve the maximum energy problem for the class  ${\mathcal F}(n,m)$ of weighted forests. In all these results, we obtain both the extremal values (of the energies) and the characterizations of the weighted graphs whose energies reach these extremal values. We also solve the similar extremal problems for  (0,1) weighted trees and forests.
In addition, for $m\ge n$ we determine the unique tree in ${\mathcal T}(n,m)$ 
with maximum energy for the weight sequence $(m-n+2,1,\ldots,1)$.

Let $G$ be a weighted tree on $n$ vertices. Then its characteristic polynomial can be written as:
\begin{equation}\label{eq:charpoly}
\phi(G,x) = \sum\limits_{k=0}^{\lfloor n/2\rfloor}(-1)^{k}b_{k}(G)x^{n-2k}
\end{equation}
where $b_{k}(G)\ge 0 \ $ for all $k$.
Here $b_k(G)$ is the sum, over all matchings of $G$ of $k$ edges, 
of the  products of the squares of the weights of the edges of the matching.

From the  Coulson integral formula for the energy of graphs \cite{CA,I2,IM,IO}, we conclude that if $G$ is a weighted bipartite graph with the characteristic polynomial as in (\ref{eq:charpoly}), then:
\begin{equation}\label{eq:coulson}
\mathbb{E}(G)=\frac{2}{\pi}\int\limits_0^{+\infty} \frac{1}{x^2}\ln\left(\sum_{k=0}^{\lfloor n/2\rfloor}b_{k}(G)x^{2k}\right) dx.
\end{equation}
Formula (\ref{eq:coulson}) holds for both simple and weighted bipartite graphs.

It follows that for a weighted bipartite graph $G$, $\mathbb{E}(G)$ is a strictly monotonically increasing function of the numbers 
$b_{k}(G)~(k=0,1,\cdots,\lfloor \frac{n}{2}\rfloor)$. 
Thus, for instance, among all trees on $n$ vertices (weights of all edges equal to 1), the star $K_{1,n-1}$ uniquely attains the minimum energy and the path $P_n$ uniquely attains the maximum energy (see
\cite{I2}).

In general, 
we can  define a quasi-ordering relation ``$\preceq$'' for weighted bipartite graphs as  follows.

Let $G_1$ and $G_2$ be two weighted bipartite graphs of order $n$. If $b_{k}(G_1)\le b_{k}(G_2)$ for all $k$ with $1\le k\le \lfloor \frac{n}{2}\rfloor$, then we write $G_1\preceq G_2$. (Note that $b_{0}(G)=1$ for all weighted bipartite graphs $G$.) Furthermore, if $G_1\preceq G_2$ and there exists at least one index $j$ such that $b_{j}(G_1)< b_{j}(G_2)$, then we write $G_1\prec G_2$. If $b_{k}(G_1)= b_{k}(G_2)$ for all $k$, we write $G_1\approx G_2$. That there are nonisomorphic weighted bipartite graphs $G_1$ and $G_2$ with $G_1\approx G_2$ implies that ``$\preceq$'' is not a partial order, in general,  but a quasi-partial order.

From the Coulson integral formula (\ref{eq:coulson}), we obtain the important fact: 

\smallskip\noindent
{\it  If $G_1$ and $G_2$ are two weighted bipartite graphs  of order $n$, then $G_1\preceq G_2$ implies that $\mathbb{E}(G_1)\le \mathbb{E}(G_2)$ and $G_1\prec G_2$ implies that $\mathbb{E}(G_1)< \mathbb{E}(G_2)$}. 

We make use of this fact throughout this paper.
For more background on graph energy we refer the reader to \cite{I2}.

\section{The Minimum Energy Problems for multitrees and multiforests}

In this section, we give the solutions of the minimum energy problems for both the classes  ${\mathcal T}(n,m)$ and  ${\mathcal F}(n,m)$.

\begin{lemma}\label{lem:one}
Let $G$ be a weighted  tree. Then we have 
$$b_{1}(G)=\sum_{e\in E(G)}w(e)^2$$
where $w(e)$ denotes the weight of edge $e$.
\end{lemma}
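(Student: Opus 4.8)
The plan is to compute $b_1(G)$ directly from its combinatorial description given just after equation~(\ref{eq:charpoly}). Recall that $b_k(G)$ is the sum, over all matchings of $G$ consisting of exactly $k$ edges, of the products of the squares of the weights of the edges in each matching. So first I would specialize this definition to the case $k=1$: a matching of size $1$ is simply a single edge, since one edge can never conflict with itself. Hence the matchings of $G$ of one edge are in one-to-one correspondence with the edges $e \in E(G)$ themselves.

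With that correspondence in hand, the product of the squares of the weights of the edges of a one-edge matching $\{e\}$ reduces to the single factor $w(e)^2$. Summing this over all one-edge matchings therefore gives $\sum_{e \in E(G)} w(e)^2$, which is exactly the claimed formula. I would write this as the chain
$$
b_1(G) = \sum_{\text{matchings }M,\,|M|=1} \;\prod_{e \in M} w(e)^2 = \sum_{e \in E(G)} w(e)^2 .
$$

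I do not expect any genuine obstacle here: the statement is essentially an unwinding of the definition of $b_1$, and the only subtlety worth a remark is that the hypothesis that $G$ is a tree is not actually needed for this identity — the same argument shows $b_1(H) = \sum_{e \in E(H)} w(e)^2$ for any weighted graph $H$, bipartite or not, since the $k=1$ term of the matching polynomial counts single edges regardless of the global structure. (The tree hypothesis matters for the interpretation of the full polynomial~(\ref{eq:charpoly}) and the Coulson formula, but not for this particular coefficient.) The lemma is thus immediate once the definition of $b_k(G)$ is applied with $k=1$, and it serves as a base case that will be combined later with the quasi-ordering ``$\preceq$'' to compare energies of weighted trees and forests.
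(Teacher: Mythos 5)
Your proof is correct, but it takes a different route from the paper's. You unwind the combinatorial description of $b_k(G)$ given after equation~(\ref{eq:charpoly}) (the sum over $k$-matchings of products of squared weights) at $k=1$, where matchings of one edge are just edges, so the identity is immediate. The paper instead works directly from the definition of the characteristic polynomial via eigenvalues: it writes $b_1(G)=-\sum_{i<j}\lambda_i\lambda_j=\frac{1}{2}\sum_j \lambda_j^2$ (using ${\rm tr}(A(G))=0$), then evaluates $\frac{1}{2}{\rm tr}(A(G)^2)=\frac{1}{2}\sum_{i,j}a_{ij}^2=\sum_{e\in E(G)}w(e)^2$. The trade-off: your argument is a one-line specialization, but it logically leans on the Sachs-type matching description of the coefficients, which the paper states without proof and which is itself a nontrivial fact about forests; the paper's trace computation is self-contained given only $\phi(G,x)=\det(xI-A)$ and elementary symmetric function identities. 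A point in your favor: your closing observation that the tree hypothesis is superfluous is validated by the paper's own proof, since the trace computation never uses acyclicity and shows $b_1(H)=\sum_{e}w(e)^2$ (as minus the coefficient of $x^{n-2}$) for any weighted graph with zero diagonal — indeed this is implicitly used later in Theorem~\ref{th:forest}, where the same formula for $b_1$ is applied to forests.
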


\begin{proof} Let $\lambda_1,\cdots,\lambda_n$ be  the eigenvalues of $G$, and let $A(G)$ be the (weighted) adjacency matrix of $G$. Then from (\ref{eq:charpoly}) we have
\begin{eqnarray*}
b_{1}(G)&=& - \sum_{1\le i<j\le n}\lambda_i \lambda_j =-\frac {1} {2} \left (\left(\sum_{j=1}^n\lambda_j\right)^{2}-\sum_{j=1}^n\lambda_j^2 
\right )=\frac {1} {2} \sum_{j=1}^n\lambda_j^2\\ &=&\frac {1} {2} {\rm tr}(A(G)^2)=\frac {1} {2} \sum_{i=1}^n \sum_{j=1}^n a_{ij}^2=\sum_{e\in E(G)}w(e)^2.
\end{eqnarray*}
\end{proof}

Let $k\le m$ be a positive integer, and $r= \lfloor m/k \rfloor$. A set of $k$ positive integers $a_{1},a_{2},\cdots, a_{k}$ with sum $m$ is  {\it as equal as possible}  provided that  $a_{i}$ equals either $r$ or $r+1$ for each   $i=1,2, \cdots,k$.  A {\it star} $K_{1,p}$ is a tree of order $p+1$ with one vertex joined to all other vertices.

\vskip 0.5cm 

Using Lemma \ref{lem:one} we can obtain the following result.

\begin{theorem}\label{th:one}
 Let $S$ be the weighted star in $\mathcal{T}(n,m)$ whose weights are as equal as possible, and $T$ be any weighted tree in $\mathcal{T}(n,m)$ with $T\not= S$ $($as weighted graphs$)$. Then we have $S\prec T$ and hence $\mathbb{E}(S)<\mathbb{E}(T)$.
\end{theorem}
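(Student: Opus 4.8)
The plan is to show that the weighted star $S$ with as-equal-as-possible weights satisfies $S \prec T$ for every other weighted tree $T \in \mathcal{T}(n,m)$, since by the Coulson-based monotonicity fact this immediately yields $\mathbb{E}(S) < \mathbb{E}(T)$. The key observation is that for any tree on $n$ vertices the only nonzero coefficients $b_k$ in the characteristic polynomial come from matchings, and a star $K_{1,n-1}$ has no matching of two or more independent edges (all edges share the central vertex). Hence for the star $S$ we have $b_k(S) = 0$ for all $k \ge 2$, while $b_1(S) = \sum_e w(e)^2$ by Lemma \ref{lem:one}. For any other tree $T$, all coefficients satisfy $b_k(T) \ge 0$, so the comparison $S \preceq T$ reduces entirely to establishing the single inequality $b_1(S) \le b_1(T)$, i.e. comparing the sums of squares of the weights.

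Thus the first main step is to prove that among all assignments of positive integer weights summing to $m$ onto the $n-1$ edges of the star, the as-equal-as-possible weights minimize $\sum_e w(e)^2$. This is a standard convexity fact: if two weights $a > b+1$ (differing by at least $2$), replacing them by $a-1$ and $b+1$ strictly decreases the sum of squares since $(a-1)^2 + (b+1)^2 = a^2 + b^2 - 2(a-b) + 2 < a^2 + b^2$. Iterating this smoothing operation drives every pair of weights to differ by at most $1$, which is exactly the as-equal-as-possible condition, and shows the minimum is attained uniquely (up to permutation of weights, which does not change the weighted graph) by $S$. This handles the comparison $b_1(S) \le b_1(T)$ whenever $T$ is itself a star.

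The second main step, and the place where the real work lies, is to handle the case when $T$ is \emph{not} a star. Here I would argue that $b_1(T) \ge b_1(S)$ still holds and, more importantly, that $\prec$ is strict. For strictness there are two sources: either $b_1(T) > b_1(S)$, or $T$ has a matching of size $\ge 2$ so that $b_j(T) > 0 = b_j(S)$ for some $j \ge 2$. A non-star tree on $n \ge 4$ vertices always contains two independent edges, giving such a matching and hence $b_2(T) > 0$; this already forces $S \prec T$ regardless of how $b_1$ compares. For the remaining small cases and for non-star trees I must still confirm $b_1(S) \le b_1(T)$, which again follows from the convexity argument: any weight distribution summing to $m$ has sum of squares at least that of the as-equal-as-possible distribution on $n-1$ edges, and a non-star tree also uses $n-1$ edges carrying total weight $m$.

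The anticipated main obstacle is the bookkeeping of when equality versus strict inequality occurs, so that the conclusion $S \prec T$ (not merely $S \preceq T$) is genuinely strict for \emph{every} $T \neq S$. The clean way to organize this is: if $T$ is a star with weights not as-equal-as-possible, strictness comes from $b_1(T) > b_1(S)$ via the smoothing inequality; if $T$ is not a star, strictness comes from the existence of a $2$-matching giving $b_2(T) > 0 = b_2(S)$. In either case at least one coefficient strictly increases while none decreases, yielding $S \prec T$ and therefore $\mathbb{E}(S) < \mathbb{E}(T)$ by the quasi-order monotonicity of energy.
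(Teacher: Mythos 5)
Your proposal is correct and follows essentially the same route as the paper: both reduce everything to the coefficients $b_1$ and $b_2$ after observing that $b_k(S)=0$ for all $k\ge 2$ (star has no $2$-matching) and $b_1=\sum_e w(e)^2$ by Lemma \ref{lem:one}, with strictness coming either from $b_1(S)<b_1(T)$ when the weights of $T$ are not as equal as possible, or from $b_2(T)>0$ when $T$ is not a star. The only sub-step difference is that you establish the unique minimality of the as-equal-as-possible weights for $\sum_e w(e)^2$ by a discrete smoothing exchange ($a>b+1$ replaced by $a-1,\,b+1$), whereas the paper deduces it from the identity $\sum_{i<j}(x_i-x_j)^2=(n-1)\sum_i x_i^2-m^2$; your variant is, if anything, slightly more self-contained, since the paper asserts the inequality between the pairwise-difference sums without argument.
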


\begin{proof} First, we always have $b_{0}(S)=b_{0}(T)=1$. Also it is easy to see that the rank of the weighted adjacency matrix $A(S)$ of $S$ is 2. So $(n-2)$ of the eigenvalues of $S$ equal zero, and thus $b_{k}(S)=0 \le b_{k}(T)$ for all $k\ge 2$.  We show that either $b_1(S)<b_1(T)$ or, when $b_1(S)=b_1(T)$,  that $b_2(T)>0$. This will then prove the theorem.

Since  $T\not= S$ (as weighted graphs), $T$ is either not a star or is not as equally as possible weighted. Let the weight sequences of $S$ and $T$ be, respectively,
\[(s_1,s_2,\ldots,s_{n-1})\mbox{ and } (t_1,t_2,\ldots,t_{n-1}).\]
We consider the following two cases.

\textbf{Case 1:} $T$ is not weighted as equal as possible.

In this case we show that $b_1(S)<b_1(T)$, that is, by Lemma \ref{lem:one},
\begin{equation}\label{eq:case1}
\sum_{i=1}^{n-1}s_i^2<\sum_{i=1}^{n-1}t_i^2.
\end{equation}
Let $J_{n-1}$ equal  the matrix of order $n-1$ with all entries one. For arbitrary real numbers $x_1,x_2,\ldots,x_{n-1}$ summing to $m$, we  have: 
\begin{eqnarray*}
\sum_{1\le i<j\le n-1}(x_i-x_j)^2 &=&(x_1,x_2,\ldots,x_{n-1})((n-1)I_{n-1}-J_{n-1})(x_1,x_2,\ldots,x_{n-1})^t\\
&=& (n-1)(x_1^2+x_2^2+\cdots+x_{n-1}^2)-(x_1+x_2+\cdots+x_{n-1})^2\\ &=&(n-1)(x_1^2+x_2^2\cdots+x_{n-1}^2)-m^2.
\end{eqnarray*}
Since
\[\sum_{1\le i<j\le n-1}(s_i-s_j)^2<\sum_{1\le i<j\le n-1}(t_i-t_j)^2,\]
this implies that
\[b_{1}(S)=\sum_{i=1}^{n-1}s_i^2<\sum_{i=1}^{n-1}t_i^2=b_{1}(T)\]
that is, (\ref{eq:case1}) holds, and so  we have $S\prec T$.

\textbf{Case 2:} $T$ is weighted as equal as possible. Since $T\ne S$ as weighted trees, $T$ is not a star. 

In this case we have 
\[b_{1}(S)=\sum_{i=1}^{n-1}s_i^2=\sum_{i=1}^{n-1}t_i^2=b_{1}(T).\]
Since $T$ is not a star, it contains at least one $2$-matching. Hence
 $b_{2}(T)>0$.
Thus we also have $S\prec T$ in this case. \end{proof}

The following theorem determines the minimum energy of the class  ${\mathcal T}(n,m)$ and the unique weighted tree in this class attaining this minimum energy.

\begin{theorem}\label{th:oneb}  Let $m= r(n-1)+t$, where  $r= \lfloor m/(n-1) \rfloor$ and $0\le t\le n-2$. Then we have: 

\noindent {\rm(a)} The multitree $S=K_{1,n-1}\in {\mathcal T}(n,m)$ whose weights are as equal as possible is the unique multitree in ${\mathcal T}(n,m)$ with minimum energy.

\noindent {\rm(b)} The value of the minimum energy in ${\mathcal T}(n,m)$ is $\mathbb {E}(S)=2\sqrt {mr+tr+t}$. 
\end{theorem}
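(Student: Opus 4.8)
The plan is to obtain part (a) immediately from the preceding Theorem \ref{th:one} and to establish part (b) by computing the characteristic polynomial of $S$ explicitly and then verifying a short arithmetic identity. For part (a), observe that $S$ is by definition the weighted star in $\mathcal{T}(n,m)$ whose weights are as equal as possible, so Theorem \ref{th:one} gives $S \prec T$, and hence $\mathbb{E}(S) < \mathbb{E}(T)$, for every weighted tree $T \in \mathcal{T}(n,m)$ with $T \ne S$. Thus $S$ is the unique multitree in $\mathcal{T}(n,m)$ of minimum energy, and no further work is required.

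For part (b), I would first determine the eigenvalues of $S$. Since a star contains no $2$-matching, $b_k(S) = 0$ for all $k \ge 2$, while $b_0(S) = 1$; hence by the general form (\ref{eq:charpoly}) the characteristic polynomial collapses to
$$\phi(S,x) = x^{n-2}\left(x^2 - b_1(S)\right).$$
Reading off the roots, $S$ has the eigenvalue $0$ with multiplicity $n-2$ together with the two eigenvalues $\pm\sqrt{b_1(S)}$, whence $\mathbb{E}(S) = 2\sqrt{b_1(S)}$.

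It then remains to evaluate $b_1(S)$. By Lemma \ref{lem:one}, $b_1(S) = \sum_{i=1}^{n-1} s_i^2$, where the $s_i$ are the weights of $S$. Since these weights are as equal as possible with sum $m = r(n-1) + t$ and $0 \le t \le n-2$, exactly $t$ of them equal $r+1$ and the remaining $n-1-t$ equal $r$ (one checks $t(r+1) + (n-1-t)r = m$). Therefore
$$b_1(S) = t(r+1)^2 + (n-1-t)r^2 = (n-1)r^2 + 2tr + t.$$
The final step is the arithmetic identity $(n-1)r^2 + 2tr + t = mr + tr + t$, which follows on substituting $m = r(n-1)+t$ into $mr$; combining this with $\mathbb{E}(S) = 2\sqrt{b_1(S)}$ yields the claimed value $\mathbb{E}(S) = 2\sqrt{mr + tr + t}$. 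There is no genuine obstacle here: once the eigenvalues of the weighted star are identified, the argument is a direct computation, the only point needing a moment's care being the bookkeeping of how many weights equal $r+1$ versus $r$ and the verification of the closed-form identity.
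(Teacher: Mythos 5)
Your proposal is correct and follows essentially the same route as the paper: part (a) is quoted directly from Theorem \ref{th:one}, and part (b) computes $b_1(S)$ via Lemma \ref{lem:one} as $(n-1-t)r^2+t(r+1)^2=mr+tr+t$ and reads off $\mathbb{E}(S)=2\sqrt{b_1(S)}$ from the collapsed characteristic polynomial $\phi(S,x)=x^n-b_1(S)x^{n-2}$. Your only addition is making the eigenvalue bookkeeping (zero with multiplicity $n-2$, plus $\pm\sqrt{b_1(S)}$) explicit, which the paper leaves implicit; the argument is otherwise identical.
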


\begin{proof}
The assertion (a) follows directly from Theorem \ref{th:one}.
From  Lemma \ref{lem:one}  we see  that 
$$b_{1}(S)=\sum_{e\in E(S)}w(e)^2= (n-1-t)r^{2}+ t(r+1)^{2}=(n-1)r^{2}+2tr+t=mr+tr+t.$$
On the other hand, the  characteristic polynomial of $S$ is:
$$\phi(S,x) = x^{n}- b_{1}(S)x^{n-2}.$$
So we have  $\mathbb {E}(S)=2\sqrt {b_{1}(S)}=2\sqrt {mr+tr+t}$.
\end{proof}

 If $G$ is a (weighted) graph with connected components $G_1,G_2,\ldots,G_l$, then we
write $G=(G_1,G_2,\ldots,G_l)$ with the order of the graphs being arbitrary.

The following theorem determines the minimum energy of the class  ${\mathcal F}(n,m)$ and the unique weighted forest in this class attaining this minimum energy.

\begin{theorem}\label{th:forest} Let $n,m$ be positive integers. Then we have:

\noindent {\rm (a)} If $m\le n-2$, then the weighted forest $F=(K_{1,m},K_{1},\cdots, K_{1})\in {\mathcal F}(n,m)$ whose edges all have weight one  is the unique minimum energy graph in ${\mathcal F}(n,m)$. In this case, the value of the minimum energy is  $\mathbb {E}(F)=2\sqrt {m}$. 

\noindent {\rm (b)}  If $m\ge n-1$, then the unique minimum energy weighted tree $S$ in ${\mathcal T}(n,m)$ given in Theorem $ \ref{th:oneb}$  is also the unique minimum energy weighted forest in ${\mathcal F}(n,m)$.  

\end{theorem}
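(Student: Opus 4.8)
The plan is to prove both parts by comparing the matching coefficients $b_k$ through the quasi-order $\preceq$, using that a forest is bipartite, so that $G_1 \prec G_2$ implies $\mathbb{E}(G_1) < \mathbb{E}(G_2)$. First I would observe that the computation in the proof of Lemma \ref{lem:one} uses only $b_1(G) = \tfrac12\,\mathrm{tr}(A(G)^2)$, which is valid for any weighted graph; hence for every weighted forest $G$ we still have $b_1(G) = \sum_{e \in E(G)} w(e)^2$. Together with $b_0(G) = 1$ and $b_k(G) \ge 0$ for all $k$, these are the only facts about the $b_k$ that I would need.

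For part (a), where $m \le n-2$, note first that $K_{1,m}$ needs $m+1 \le n-1$ vertices, so $F$ indeed lies in $\mathcal{F}(n,m)$ and has at least one isolated vertex; since $F$ is a star together with isolated points and all its weights equal $1$, I compute $b_1(F) = m$, $b_k(F) = 0$ for $k \ge 2$, and $\mathbb{E}(F) = 2\sqrt{m}$. For an arbitrary $G \in \mathcal{F}(n,m)$ with edge weights $w_1, \dots, w_p$ (each $\ge 1$, summing to $m$), I have $b_1(G) = \sum w_i^2 \ge \sum w_i = m = b_1(F)$, with equality iff every $w_i = 1$, while $b_k(G) \ge 0 = b_k(F)$ for $k \ge 2$; thus $F \preceq G$ always. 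To obtain strictness when $G \ne F$: if some weight exceeds $1$ then $b_1(G) > b_1(F)$; otherwise all weights are $1$, so $G$ is a simple forest with exactly $m$ edges, and $G \ne F$ forces $G$ not to be a single star plus isolated vertices, whence $G$ contains a $2$-matching and $b_2(G) > 0 = b_2(F)$. Either way $F \prec G$, giving the claimed uniqueness.

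For part (b), where $m \ge n-1$, the star $S$ of Theorem \ref{th:oneb} lies in $\mathcal{F}(n,m)$, and I split $\mathcal{F}(n,m)$ into trees and disconnected forests. For a tree $T \ne S$, Theorem \ref{th:one} already gives $S \prec T$. For a disconnected forest $G$, its underlying graph has $c \ge 2$ components and hence $q = n - c \le n-2$ edges, with weights summing to $m$. Writing $f(q)$ for the minimum of $\sum w_i^2$ over all ways of expressing $m$ as a sum of $q$ positive integers, convexity shows this minimum is attained by the as-equal-as-possible partition, so $b_1(G) \ge f(q)$, while $f(n-1) = b_1(S)$ by the computation in Theorem \ref{th:oneb}. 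Since $b_k(S) = 0 \le b_k(G)$ for $k \ge 2$, it then suffices to show $f(q) > f(n-1)$ for $q \le n-2$, which yields $b_1(G) > b_1(S)$ and hence $S \prec G$.

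The hard part will be this strict monotonicity of $f$. I would argue that for $q < m$ (which holds here since $q \le n-2 < n-1 \le m$) any partition of $m$ into $q$ parts has a part $a \ge 2$, and replacing $a$ by two parts $b + c = a$ with $b,c \ge 1$ strictly decreases the sum of squares because $b^2 + c^2 < a^2$; this shows $f$ is strictly decreasing on $\{1, \dots, m\}$, so $f(q) > f(n-1)$ whenever $q < n-1$. Combining the tree and forest cases gives $S \prec G$ for every $G \in \mathcal{F}(n,m)$ with $G \ne S$, establishing that $S$ is the unique minimum energy forest.
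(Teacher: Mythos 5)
Your proof is correct. Part (a) is essentially the paper's own argument: $b_1(G)=\sum_{e}w(e)^2\ge m$ with equality iff all weights equal one, plus $b_2(G)>0$ for a simple forest whose $m$ edges do not form a star. In part (b), however, you take a genuinely different route. The paper splits on whether the weight sequence of $H$ agrees with that of $S$: if it differs, it asserts $b_1(S)<b_1(H)$ ``similarly as in the proof of Theorem \ref{th:one}'', i.e.\ via the variance identity $(n-1)\sum x_i^2-m^2=\sum_{i<j}(x_i-x_j)^2$, which for a forest with fewer than $n-1$ edges tacitly requires padding the weight sequence with zeros and checking the padded sequence is not as equal as possible; if the sequences agree, $H$ has $n-1$ positive weights, hence $n-1$ edges, hence is a tree, and Theorem \ref{th:one} applies. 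You instead split on connectivity: trees are dispatched by Theorem \ref{th:one} directly, and for a disconnected forest with $q\le n-2$ edges you introduce $f(q)$, the minimum of $\sum w_i^2$ over partitions of $m$ into $q$ positive integers, and prove $f$ strictly decreasing on $\{1,\dots,m\}$ by the elementary splitting inequality $b^2+c^2<(b+c)^2$, giving $b_1(G)\ge f(q)>f(n-1)=b_1(S)$; the range check $q\le n-2<n-1\le m$ correctly keeps the iteration $f(q)>f(q+1)>\cdots>f(n-1)$ legitimate. Your approach is a bit longer but buys complete explicitness exactly where the paper is terse (the zero-padding subtlety never arises for you), and your opening remark that Lemma \ref{lem:one} extends to arbitrary weighted graphs because its proof only uses $b_1=\frac{1}{2}\,\mathrm{tr}(A(G)^2)$ makes precise a fact the paper uses tacitly in both parts.
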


\begin{proof} First we  prove assertion (a).
 We have $b_{1}(F)=m$ and  $b_{i}(F)=0$ for $i\ge 2$. Let $H\in  {\mathcal F}(n,m)$ with $H\not =F$. Then 
$$b_{1}(H)= \sum_{e\in E(H)}w(e)^{2}\ge  \sum_{e\in E(H)}w(e) = m = b_{1}(F)$$ with equality if and only if the weights of all edges of $H$ are one. So if the weight of some edge of $H$ is not one, then we have $b_{1}(H)>b_{1}(F)$, and thus $F\prec H$  and $\mathbb{E}(F)<\mathbb{E}(H)$.

On the other hand, if the weight of every edge of $H$ is one, then $H$ is a forest of order $n$ with $m$ edges. If $H\not =F$, then these $m$ edges of $H$ do not form a star, and so  
$b_{2}(H)>0=b_{2}(F)$. Thus we also have $F\prec H$ and $\mathbb{E}(F)<\mathbb{E}(H)$. 

For (b), let $H\in  {\mathcal F}(n,m)$ with $H\not =S$. If the weight sequence of $H$ is not the same as that of $S$, then similarly as in the proof of  Theorem \ref{th:one}, we can show that $b_{1}(S)<b_{1}(H)$. Thus  $S\prec H$ and $\mathbb{E}(S)<\mathbb{E}(H)$.

 If the weight sequence of $H$ is the same as that of $S$, then $H$ is a tree, and the result now follows from Theorem \ref{th:one}. 
\end{proof}

The following theorem shows that, if we consider the subclass of  ${\mathcal T}(n,m)$ where the weight sequence is fixed, then the star is still the unique minimum energy graph.

\begin{theorem}\label{th:new}
Let $T\in{\mathcal T}(n,m)$ with $T$ not equal to the star $K_{1,n-1}$. Let $S=K_{1,n-1}\in {\mathcal T}(n,m)$ whose weight sequence is the same as for $T$. Then  $S\prec T$ and thus $\mathbb{E}(S)<\mathbb{E}(T)$.
\end{theorem}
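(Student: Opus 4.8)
The plan is to exploit the fact that, unlike in Theorem \ref{th:one}, the weight sequence is now held \emph{fixed}, so the comparison of the coefficients $b_k(S)$ and $b_k(T)$ becomes almost trivial at the low end and the entire argument collapses to what was Case 2 in the proof of Theorem \ref{th:one}. Concretely, I would first record the two baseline equalities $b_0(S)=b_0(T)=1$ and, using Lemma \ref{lem:one}, $b_1(S)=\sum_{e}w(e)^2=b_1(T)$, where the last equality holds precisely because $S$ and $T$ share the same weight sequence and $b_1$ depends only on the multiset of edge weights, not on how those weights are distributed over the edges of the tree.

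Next I would handle the coefficients $b_k$ for $k\ge 2$. Since $S=K_{1,n-1}$ is a star, its weighted adjacency matrix has rank $2$ (every leaf row is a scalar multiple of the center's coordinate vector, and the center row is independent of these), so $n-2$ of its eigenvalues vanish and its characteristic polynomial is $x^{n}-b_1(S)x^{n-2}$; hence $b_k(S)=0$ for every $k\ge 2$. This immediately gives $b_k(S)=0\le b_k(T)$ for all $k\ge 2$, so that $S\preceq T$ holds automatically once $b_1$ has been matched.

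To upgrade $\preceq$ to $\prec$ I only need a single index at which the inequality is strict. Because $T$ is a tree that is not a star, it has diameter at least $3$, hence contains a path on three edges and therefore two independent edges, i.e.\ a $2$-matching. Such a $2$-matching contributes a positive term to $b_2(T)$, giving $b_2(T)>0=b_2(S)$. Combining this with the comparisons above yields $S\prec T$, and then the monotonicity consequence of the Coulson integral formula (\ref{eq:coulson}) recorded in the Introduction gives $\mathbb{E}(S)<\mathbb{E}(T)$.

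I do not expect any genuine obstacle here: the statement is essentially the ``equal weight sequence'' specialization of Case 2 in the proof of Theorem \ref{th:one}, and the only non-formal step is the elementary observation that every non-star tree admits a $2$-matching. The one point worth stating carefully is that matching the weight sequences is exactly what forces $b_1(S)=b_1(T)$; this is why we never need the ``as equal as possible'' hypothesis nor the quadratic-form computation used in Case 1 of Theorem \ref{th:one}.
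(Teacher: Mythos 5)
Your proposal is correct and follows essentially the same route as the paper's own proof: fixed weight sequence gives $b_1(S)=b_1(T)$ via Lemma \ref{lem:one}, the rank-2 structure of the weighted star forces $b_k(S)=0$ for $k\ge 2$, and the existence of a $2$-matching in any non-star tree yields $b_2(T)>0$, hence $S\prec T$ and $\mathbb{E}(S)<\mathbb{E}(T)$. Your only addition is to spell out why a non-star tree contains a $2$-matching (diameter at least $3$), a detail the paper leaves implicit.
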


\begin{proof}
We have $b_{0}(S)=b_{0}(T)=1$, and $b_{1}(S)=b_{1}(T)$ by Lemma \ref{lem:one}, since $S$ and $T$ have the same weight sequence. Also by the same reason as in the proof of Theorem \ref{th:one}, we have  $b_{k}(S)=0 \le b_{k}(T)$ for all $k\ge 2$, and  $b_{2}(T)>0$ since  $T$ is not a star. So again we have  $S\prec T$. 
\end{proof}

\section{Some Results and Examples for the Maximum Energy Problems}

We first consider the maximum energy $\overline{\mathbb{E}}_F(n,m)$  over the class  ${\mathcal F}(n,m)$ of weighted forests and characterize those weighted forests in this class whose energy attains this maximum value.

\begin{theorem}\label{th:for1}
A weighted forest  $F$ in ${\mathcal F}(n,m)$ has the maximum energy in  ${\mathcal F}(n,m)$ if and only if each connected component of $F$ is $K_{1}$ $($a single vertex$)$ or $K_{2}$ $($two vertices joined by an edge$)$, and in this case $\mathbb {E}(F)=2m$. 
\end{theorem}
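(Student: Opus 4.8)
The plan is to exploit that, by the Coulson formula (\ref{eq:coulson}), the energy of a bipartite graph is determined by its sequence of matching numbers $b_{k}$, and that $\preceq$ implies $\le$ for energies; since forests are bipartite, this machinery applies directly. First I would settle the value and the ``if'' direction by a direct computation: if every component of $F$ is $K_{1}$ or $K_{2}$, then the adjacency matrix is block diagonal with one block $\begin{pmatrix} 0 & w \\ w & 0\end{pmatrix}$ for each edge of weight $w$ together with zero blocks, so the nonzero eigenvalues are exactly $\pm w_{1}, \ldots, \pm w_{p}$ where $w_{1}, \ldots, w_{p}$ are the edge weights; hence $\mathbb{E}(F) = 2\sum_{j} w_{j} = 2m$.

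For the converse I would compare $F$ with the weighted matching $M$ whose edges carry exactly the multiset of weights $w_{1}, \ldots, w_{p}$ of the edges of $F$ (padding $F$, or $M$, with isolated vertices so that both have the same order; this alters neither the $b_{k}$ nor the energy). The key point is purely combinatorial: every $k$-matching of $F$ is in particular a $k$-subset of its edge set, while in $M$ all edges are pairwise disjoint and so every $k$-subset of edges is a $k$-matching; since the weight product attached to a given subset is the same in $F$ and in $M$, this yields $b_{k}(F) \le b_{k}(M)$ for all $k$, that is, $F \preceq M$. As $\mathbb{E}(M) = 2m$ by the first paragraph, this already reproves $\mathbb{E}(F) \le 2m$. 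For equality, observe that $F \prec M$ as soon as some $2$-subset of edges fails to be a $2$-matching of $F$, i.e. as soon as $F$ has two edges sharing a vertex: that pair contributes $w_{i}^{2} w_{j}^{2} > 0$ to $b_{2}(M)$ but not to $b_{2}(F)$, forcing $b_{2}(F) < b_{2}(M)$ and hence $\mathbb{E}(F) < \mathbb{E}(M) = 2m$.

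It then remains to translate ``no two edges share a vertex'' into the stated structure: a forest has no two adjacent edges if and only if every vertex has degree at most one, i.e. if and only if each connected component is $K_{1}$ or $K_{2}$; conversely such an $F$ meets the equality case by the computation above. I expect the main obstacle to be exactly this equality analysis rather than the inequality $\mathbb{E}(F) \le 2m$: one must argue cleanly that the inclusion of $k$-matchings into $k$-subsets is \emph{strict} at $k=2$ precisely when a degree-$\ge 2$ vertex is present, and one must check the small bookkeeping that padding with isolated vertices legitimately puts $F$ and $M$ into a common order $n$, so that the quasi-order $\preceq$ and its energy implication apply verbatim.
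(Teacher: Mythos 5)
Your proof is correct, but it takes a genuinely different route from the paper's. For the ``only if'' (strict inequality) direction, the paper gives no internal argument at all: it simply invokes the result of \cite{SHGD} that $\mathbb{E}(F)<2m$ holds for every positively weighted forest of total weight $m$ having a component other than $K_1$ or $K_2$, and specializes it to integral weights. You replace that citation with a self-contained combinatorial argument: comparing $F$ against the disjoint-edge forest $M$ carrying the same weight multiset, you observe that $b_k(F)\le b_k(M)=e_k(w_1^2,\ldots,w_p^2)$ (the elementary symmetric function) because the $k$-matchings of $F$ form a subfamily of the $k$-subsets of its edges, with strictness at $k=2$ exactly when some vertex has degree at least two; the quasi-order fact $F\prec M \Rightarrow \mathbb{E}(F)<\mathbb{E}(M)$ from (\ref{eq:coulson}) then finishes. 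This buys two things: the theorem no longer depends on a reference listed as ``to appear,'' and since your argument nowhere uses integrality, it actually reproves the cited fact for arbitrary positive real weights, which is the generality the paper's proof silently relies on. Your bookkeeping is also handled correctly: padding with isolated vertices multiplies the characteristic polynomial by a power of $x$, leaving all $b_k$ and the energy unchanged, so $F$ and $M$ can legitimately be compared at a common order; the only point worth stating explicitly is that the matching description of the coefficients in (\ref{eq:charpoly}), phrased in the paper for weighted trees, extends to weighted forests because $\phi$ is multiplicative over connected components --- a routine remark, but your argument uses it. The paper's route is shorter on the page; yours is the more informative proof.
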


\begin{proof} It is straightforward to check that if each component of $F$ is $K_{1}$ or $K_{2}$, then $\mathbb {E}(F)=2m$, independent of how the weights are distributed on  the edges of $F$ (even if the weights are  general positive real  numbers).

On the other hand, if some component of $F$ is not $K_{1}$ or $K_{2}$, then from \cite{SHGD} we know that $\mathbb {E}(F)< 2m$ for all positively weighted forests of order $n$ with total weight sum $m$. Thus $\mathbb {E}(F)< 2m$ also holds for this integral weighted forest  $F\in {\mathcal F}(n,m)$. This prove the desired result.
\end{proof}

\begin{lemma}\label{lem:for1} Let $a=(a_{1},a_{2},\cdots,a_{n-1})=(m-n+2,1,\cdots,1)$ and $b=(b_{1},b_{2},\cdots,b_{n-1})$ be non-increasing positive integral vectors of dimension $n-1$, with $\sum_{i=1}^{n-1}a_{i}=\sum_{i=1}^{n-1}b_{i}=m$ and $a\not =b$. Then we have $\sum_{i=1}^{n-1}a_{i}^{2}>\sum_{i=1}^{n-1}b_{i}^{2}.$
\end{lemma}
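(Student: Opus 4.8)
The plan is to reduce the inequality to the elementary fact that, among nonnegative integers with a fixed sum, the sum of squares is maximized by concentrating all of the mass in a single entry. First I would substitute $c_i = b_i - 1 \ge 0$ (legitimate since each $b_i$ is a positive integer), noting that the vector $a$ corresponds in exactly the same way to $(m-n+1,0,\ldots,0)$. Writing $S = \sum_{i=1}^{n-1} c_i = m-(n-1)$ (the same quantity $\sum(a_i-1)$, since $a$ and $b$ both sum to $m$), a one-line expansion gives
\[
\sum_{i=1}^{n-1} b_i^2 = \sum_{i=1}^{n-1}(1+c_i)^2 = (n-1) + 2S + \sum_{i=1}^{n-1} c_i^2,
\]
and the analogous identity for $a$ reads $\sum_{i=1}^{n-1} a_i^2 = (n-1) + 2S + S^2$. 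Hence the assertion is equivalent to the single inequality $\sum_{i=1}^{n-1} c_i^2 < S^2$.

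The key step is the bound $\sum_i c_i^2 \le \bigl(\sum_i c_i\bigr)^2 = S^2$, which is valid because all $c_i \ge 0$; indeed the difference is exactly $2\sum_{i<j} c_i c_j \ge 0$, and equality holds if and only if at most one of the $c_i$ is nonzero. The final step is to turn the hypothesis $a \ne b$ into strictness: if at most one $c_i$ were nonzero, then since $b$ is non-increasing that entry would have to be $c_1 = S$, forcing $b_1 = m-n+2$ and $b_2 = \cdots = b_{n-1} = 1$, i.e. $b = a$, contradicting $a \ne b$. Therefore at least two of the $c_i$ are strictly positive, so $2\sum_{i<j}c_i c_j > 0$, giving $\sum_i c_i^2 < S^2$ and hence the desired strict inequality.

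I expect the only delicate point to be this last equality analysis: one must use the non-increasing hypothesis to conclude that ``at most one nonzero $c_i$'' genuinely pins $b$ down to equal $a$, rather than merely to be a permutation of $a$. Everything else is a routine expansion and the trivial bound $2\sum_{i<j}c_ic_j\ge 0$. It is also worth recording that, via Lemma~\ref{lem:one}, the quantity $\sum_i b_i^2$ is precisely $b_1$ of the corresponding weighted graph, which is why this lemma will feed into the later maximum-energy discussion.

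A conceptually equivalent alternative, which I would mention but not adopt, is to observe that $a$ majorizes every such $b$: the partial sums satisfy $\sum_{i=1}^k a_i = m-n+1+k \ge \sum_{i=1}^k b_i$, since the remaining $n-1-k$ entries of $b$ are each at least $1$, with equality at $k=n-1$. Strict Schur-convexity of $x \mapsto \sum_i x_i^2$ then yields the claim. In purely elementary terms this is the statement that repeatedly transferring one unit from an entry $b_j \ge 2$ with $j \ge 2$ to $b_1$ keeps all entries positive and the sum fixed while increasing $\sum_i b_i^2$ by $2(b_1-b_j)+2 > 0$, the process terminating exactly at $a$. I would keep the substitution argument as the main proof for its brevity.
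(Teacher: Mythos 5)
Your proposal is correct, and it takes a genuinely different route from the paper. The paper argues by local improvement at an extremum: since $a\ne b$, the non-increasing and positivity hypotheses force $m-n+2>b_1\ge b_2\ge 2$, and transferring one unit from $b_2$ to $b_1$ preserves the sum while strictly increasing the sum of squares (by $2(b_1-b_2)+2>0$); hence no vector $b\ne a$ can maximize $\sum_i b_i^2$ over the finite set of admissible vectors, so the maximum is attained only at $a$, which yields the strict inequality. Your elementary transfer argument at the end of your proposal is essentially this proof, recast as a terminating process rather than an appeal to the existence of a maximizer. Your main proof, by contrast, is a direct one-shot computation: the shift $c_i=b_i-1\ge 0$ reduces the claim to $\sum_i c_i^2<\bigl(\sum_i c_i\bigr)^2$, settled by nonnegativity of the cross terms $2\sum_{i<j}c_ic_j$ together with a clean equality analysis, where you correctly isolate the one delicate point --- that the non-increasing hypothesis pins ``at most one nonzero $c_i$'' down to $b=a$ exactly (and your formula $c_1=S$ covers the degenerate all-zero case, where $S=0$ forces $b=(1,\ldots,1)=a$). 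Each approach has its merits: your direct computation sidesteps an issue the paper glosses over (the perturbed vector $c$ in the paper's proof need not remain non-increasing, so strictly one must reorder or note that the objective is permutation-invariant), while the paper's smoothing-transfer technique, like your majorization remark, generalizes immediately to any strictly Schur-convex objective in place of $\sum_i x_i^2$.
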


\begin{proof}
Since $a\not =b$, we have $m-n+2>b_{1}\ge b_{2}\ge 2$. Let $c=(c_{1},c_{2},\cdots,c_{n-1})$ with 
$c_{1}=b_{1}+1$, $c_{2}=b_{2}-1$ and $c_{i}=b_{i}$ for all $i\ge 2$. Then we have $\sum_{i=1}^{n-1}c_{i}=m$ and $\sum_{i=1}^{n-1}c_{i}^{2}>\sum_{i=1}^{n-1}b_{i}^{2}$. This implies that $\sum_{i=1}^{n-1}b_{i}^{2}$ does not reach the maximum among all the  positive integral vectors $b\ne a$  of dimension $n-1$ the  sum of whose coordinates equals  $m$. So the corresponding maximum can only be reached by the vector $a$, and thus  $\sum_{i=1}^{n-1}a_{i}^{2}>\sum_{i=1}^{n-1}b_{i}^{2}$.  
\end{proof}

Using Lemma \ref{lem:for1},  we can obtain the maximum energy graph in the subclass of  ${\mathcal T}(n,m)$ where the underlying (unweighted) graph is fixed to be the star.

\begin{theorem}\label{th:onea}   The weighted star $S^*=K_{1,n-1}\in {\mathcal T}(n,m)$ with the weight sequence  $a=(a_{1},a_{2},\cdots,a_{n-1})=(m-n+2,1,\cdots,1)$  is the unique weighted star in ${\mathcal T}(n,m)$ with maximum energy among all the weighted stars in  ${\mathcal T}(n,m)$, and its energy is $\mathbb {E}(S^*)=2\sqrt {(m-n+2)^{2}+n-2}$.
\end{theorem}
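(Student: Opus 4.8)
The plan is to reduce the maximization of the energy over weighted stars to the purely arithmetic problem of maximizing the sum of squares of the weights, and then to invoke Lemma \ref{lem:for1}. First I would observe that any weighted star $S = K_{1,n-1} \in \mathcal{T}(n,m)$ has a weighted adjacency matrix of rank $2$, exactly as noted in the proof of Theorem \ref{th:one}. Hence $n-2$ of its eigenvalues vanish, and its characteristic polynomial is $\phi(S,x) = x^{n} - b_{1}(S)\,x^{n-2}$ (as already computed in the proof of Theorem \ref{th:oneb}). Consequently its two nonzero eigenvalues are $\pm\sqrt{b_{1}(S)}$, so $\mathbb{E}(S) = 2\sqrt{b_{1}(S)}$.

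Next I would apply Lemma \ref{lem:one}, which gives $b_{1}(S) = \sum_{i=1}^{n-1} w_{i}^{2}$, where $(w_{1},\ldots,w_{n-1})$ is the (non-increasing) weight sequence of $S$. Thus among all weighted stars in $\mathcal{T}(n,m)$, maximizing $\mathbb{E}(S)$ is equivalent to maximizing $\sum_{i=1}^{n-1} w_{i}^{2}$ over all non-increasing positive integral vectors of dimension $n-1$ with coordinate sum $m$. Since the square root is strictly increasing, the energy is maximized precisely when this sum of squares is maximized, and the uniqueness of the energy maximizer is inherited from the uniqueness of the sum-of-squares maximizer.

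By Lemma \ref{lem:for1}, this maximum is attained uniquely at the weight sequence $a = (m-n+2, 1, \ldots, 1)$, which identifies $S^{*}$ as the unique weighted star in $\mathcal{T}(n,m)$ of maximum energy. Finally I would compute the extremal value directly: $\sum_{i=1}^{n-1} a_{i}^{2} = (m-n+2)^{2} + (n-2)\cdot 1^{2} = (m-n+2)^{2} + n - 2$, whence $\mathbb{E}(S^{*}) = 2\sqrt{(m-n+2)^{2} + n - 2}$. There is no genuine obstacle here, since Lemma \ref{lem:for1} already performs the combinatorial work; the only point requiring care is the justification that the energy of a weighted star depends on its weights solely through $b_{1} = \sum_{i} w_{i}^{2}$, which in turn rests on the rank-$2$ structure of its adjacency matrix.
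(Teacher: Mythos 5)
Your proposal is correct and takes essentially the same route as the paper: both reduce the problem to maximizing $b_1(S)=\sum_{i=1}^{n-1}w_i^2$ via Lemma \ref{lem:one} and then invoke Lemma \ref{lem:for1} to identify the weight sequence $(m-n+2,1,\ldots,1)$ as the unique maximizer, finishing with the same computation of the extremal value. The only cosmetic difference is the justification of monotonicity: you use the explicit formula $\mathbb{E}(S)=2\sqrt{b_1(S)}$, valid for every weighted star by the rank-$2$ structure, while the paper notes $b_i(T)=b_i(S^*)=0$ for $i\ge 2$ and concludes $T\prec S^*$ via the quasi-order and the Coulson-integral fact --- these are equivalent here, since with all higher $b_k$ vanishing the quasi-order comparison is exactly a comparison of $b_1$.
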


\begin{proof}
Let $T\in {\mathcal T}(n,m)$ be a weighted star  whose weight sequence  is given by  $b=(b_{1},b_{2},\cdots,b_{n-1})$ different from the weight sequence $a$ of $S^{*}$. From  Lemma \ref{lem:for1}, we have 
$$b_{1}(S^*)=\sum_{i=1}^{n-1}a_{i}^{2}>\sum_{i=1}^{n-1}b_{i}^{2}=b_{1}(T)$$  
On the other hand, we also have $b_{i}(S^*)=b_{i}(T)=0$ for $i\ge 2$. So we have $T\prec S^*$ and hence $\mathbb{E}(T)<\mathbb{E}(S^*)$.
Finally, we have  $$\mathbb {E}(S^*) =2\sqrt {b_{1}(S^*)}  = 2\sqrt {\sum_{i=1}^{n-1}a_{i}^{2}} = 2\sqrt {(m-n+2)^{2}+n-2}$$.
\end{proof}

The maximum energy over the class ${\mathcal T}(n,m)$ of weighted trees on $n$ vertices with total weight $m$ appears to be very difficult, even if we restrict ourselves to the weighted paths in ${\mathcal T}(n,m)$.


\begin{example}\label{ex:one}
{\rm Let $T\in  {\mathcal T}(4,m)$ be the weighted path $P_{4}$ of order 4 where the weights of its three edges are $a,b,c$ with $a+b+c=m$ and the edge with  weight $b$ is the middle edge of $P_{4}$. Then 
a simple computation yields 
$$\phi(T,x) = x^{4}-(a^{2}+b^{2}+c^{2})x^{2}+a^{2}c^{2}.$$
Let $y_1$ and $y_2$ be the roots of the quadratic equation
\[y^2-(a^2+b^2+c^2)y+a^2c^2=0.\]
Then the eigenvalues of $T$ are
$\pm \sqrt{y_1}$ and $\pm \sqrt{y_2}$. Thus
\[\mathbb{E}(T)=2(\sqrt{y_1}+\sqrt{y_2})\] implying that
\[\frac{\mathbb{E}(T)^2}{4}=y_1+y_2+2\sqrt{y_1y_2}=a^2+b^2+c^2+2ac.\]
Hence
\begin{equation}\label{eq:formula}
\mathbb{E}(T)=2\sqrt{(a+c)^2+b^2}=2\sqrt{(m-b)^2+b^2}.\end{equation}
(Thus in the case $n=4$, the energy does not depend on the individual
values of $a$ and $c$.)
In the interval $1\le b\le m-2$,
the function in (\ref{eq:formula})  reaches the maximum when $b=1$, and so the maximum energy equals
$2\sqrt{(m-1)^2+1}$ and is attained for all positive integral weight sequences $a,b,c$ where $a\ge 1$, $b=1$, $c=m-1-a$.}
\end{example}

In contrast, we offer the following conjecture.

\begin{conjecture} Let $n\ge 5$ and let $m\ge n$.
The path in ${\mathcal T}(n,m)$ with weight sequence $(m-n+2,1,\ldots,1)$ where  the weight of one of the pendent edges equals $m-n+2$ is the unique tree in ${\mathcal T}(n,m)$ with maximum energy.
\end{conjecture}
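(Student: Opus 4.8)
My plan is to argue throughout at the level of the coefficients $b_k$ and the Coulson integral (\ref{eq:coulson}). Write $P^{*}$ for the conjectured extremal path, carrying one pendent edge of weight $m-n+2$ and all remaining edges of weight $1$, and let $T$ be any other tree in ${\mathcal T}(n,m)$; the goal is $\mathbb{E}(T)<\mathbb{E}(P^{*})$. First I would dispose of the trees $T$ whose weight multiset is exactly $(m-n+2,1,\ldots,1)$: such a $T$ is merely a non-extremal placement of these fixed weights, and the companion result announced in the introduction, that for this fixed weight sequence the heavy-pendent path is the unique maximum-energy tree in ${\mathcal T}(n,m)$, then gives $\mathbb{E}(T)<\mathbb{E}(P^{*})$ at once. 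The genuine content is therefore to defeat every tree $T$ whose weight multiset differs from $(m-n+2,1,\ldots,1)$.

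For such a $T$ the natural instinct is to compare it with $P^{*}$ in the quasi-order $\preceq$, and here Lemma \ref{lem:for1} supplies the one unconditional fact: the sequence $(m-n+2,1,\ldots,1)$ uniquely maximizes $\sum_i w_i^{2}=b_1$, so $b_1(P^{*})>b_1(T)$ always. The obstruction is that $b_1$ is essentially the only coefficient one controls for free. A more balanced weighting distributes large squared weights over many edges, so for the higher coefficients $b_k$ (sums over $k$-matchings of products of $k$ squared weights) a nearly-equal weighted path can overtake $P^{*}$; for large $m$ one computes that $b_2$ of such a path grows like $m^{4}$ whereas $b_2(P^{*})$ grows only like $m^{2}$. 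Hence $P^{*}\preceq T$ \emph{fails}, the monotonicity shortcut is unavailable, and this is exactly the obstruction that leaves the statement a conjecture.

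Consequently the proof must compare the two Coulson integrals directly, even though the integrand polynomials $\sum_k b_k(P^{*})x^{2k}$ and $\sum_k b_k(T)x^{2k}$ cross. I would split $\int_0^{\infty}$ at a threshold $x_0$. On $(0,x_0)$ the logarithm is governed by $1+b_1x^{2}$, so the strictly larger $b_1$ makes $P^{*}$ win by a margin I would bound from below using Lemma \ref{lem:for1}; on $(x_0,\infty)$ the top-degree term dominates and $T$ may be ahead, so the task is to bound this loss from above and show the small-$x$ gain always wins. For large $m$ this should be tractable via the expansion $\mathbb{E}(P^{*})=2(m-n+2)+\mathbb{E}(P_{n-2})+o(1)$, in which the heavy edge contributes a near-$\pm(m-n+2)$ eigenvalue pair while the surviving unit path $P_{n-2}$ supplies the constant term. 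Any competitor with two non-unit weights has a strictly smaller leading coefficient $2(m-n+2)$, and any competitor keeping a single heavy edge but placing it in the interior splits the remainder into $(P_a,P_b)$ with $a+b=n-2$, whose energy is strictly smaller than $\mathbb{E}(P_{n-2})$ since joining two paths strictly increases every $b_k$.

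The regime I expect to be genuinely hard is small and moderate $m$, where the asymptotics do not yet dominate and where $\preceq$ really does fail. A plausible attack is an induction on $n$ driven by the deletion recursion $b_k(G)=b_k(G-e)+w(e)^{2}\,b_{k-1}(G-u-v)$ for an edge $e=uv$, peeling off a pendent edge to lower $n$ and bottoming out at the $n=4$ path analyzed in Example \ref{ex:one}. But turning such recursions into a rigorous integral comparison valid simultaneously for all shapes and all weight multisets is precisely the step I expect to resist a routine argument, and it seems to call for an idea genuinely beyond the quasi-order $\preceq$ that carries the rest of the paper.
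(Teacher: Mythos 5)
The statement you set out to prove is not proved in the paper at all: it is stated there explicitly as a conjecture, and the paper establishes only the special case in which the weight sequence is fixed to be $(m-n+2,1,\ldots,1)$ (Theorem \ref{th:gong} and its corollary, proved by induction on $n$ via the pendent-edge recursion $b_k(T)=b_k(T-v)+b_{k-1}(T-v-u)$). Your opening reduction is therefore sound as far as it goes: invoking that theorem correctly disposes of every competitor whose weight multiset is exactly $(m-n+2,1,\ldots,1)$, and your diagnosis of why the paper's quasi-order machinery cannot go further is accurate — Lemma \ref{lem:for1} gives $b_1(P^*)>b_1(T)$ for any other weight multiset, but a balanced weighting makes $b_2(T)$ grow like $m^4$ against $b_2(P^*)=O(m^2)$, so $T\preceq P^*$ genuinely fails and the Coulson monotonicity shortcut is unavailable. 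This matches exactly the obstruction that left the statement open.

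The gap is that everything after this diagnosis is a program, not a proof, and you say so yourself. The split of the integral (\ref{eq:coulson}) at a threshold $x_0$ comes with no actual estimate: you never bound the large-$x$ loss (where $T$'s integrand dominates by a factor tending to infinity with $m$) against the small-$x$ gain, and it is not evident that the $b_1$ advantage, which is only of size $O(m)$ in the coefficient, can absorb a deficit in coefficients of size $m^4$. The large-$m$ asymptotics are likewise unjustified heuristics: the expansion $\mathbb{E}(P^*)=2(m-n+2)+\mathbb{E}(P_{n-2})+o(1)$ needs an eigenvalue perturbation argument you do not supply, and even granting it, a competitor with multiset $(m-n+1,2,1,\ldots,1)$ loses only $2$ in the leading term while its bounded remainder can differ from $\mathbb{E}(P_{n-2})$ by a constant, so the comparison at order $O(1)$ requires genuine quantitative work, uniformly in the placement of the two heavy edges. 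Finally, you concede the small- and moderate-$m$ regime outright, and the deletion recursion you propose there cannot close by itself, because the induction hypothesis (the conjecture for $n-1$) controls energies, not the coefficientwise data the recursion transports — precisely the reason the paper's own induction succeeds only when the weight sequence is frozen. So the proposal correctly reproduces the paper's partial result and correctly locates the difficulty, but it does not prove the conjecture; no proof of it exists in the paper either.
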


If we restrict ourselves to the weighted trees in ${\mathcal T}(n,m)$, then the conclusion of the conjecture holds, as we now show.
Let $P_n^*$ be the weighted path $P_n$ in ${\mathcal T}(n,m)$ whose edges have weights $a,1,\ldots,1$ where $a=m-n+2$ is the weight of a pendent edge of $P_n$.

\begin{theorem}\label{th:gong} Let $n\ge 3$,  let $m\ge n$, and let $a=m-n+2\ge 2$.
 Let ${\mathcal T}(n,m;a,1,\ldots,1)$ be the set of all  trees  in ${\mathcal T}(n,m)$
with  weight sequence $(a,1,\cdots,1)$.  Then for all $T\in {\mathcal T}(n,m;a,1,\ldots,1)$ with $T\ne P_n^*$,  we have $T\prec P_n^*$. 
\end{theorem}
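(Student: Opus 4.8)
The plan is to reduce the whole comparison to the classical fact that paths maximize the number of matchings, via a clean decomposition of the coefficients $b_k$ according to the single heavy edge.

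Write $\widehat{T}$ for the underlying unweighted tree of $T$ and let $e_0=uv$ be the unique edge of weight $a$ (all other edges have weight $1$). For a simple graph $H$ let $m_k(H)$ denote the number of $k$-matchings of $H$. In the definition of $b_k$ each $k$-matching contributes the product of the squares of its edge-weights, and this product equals $a^2$ if the matching uses $e_0$ and $1$ otherwise. Partitioning the $k$-matchings of $\widehat{T}$ according to whether they contain $e_0$, and using $m_k(\widehat{T};\,e_0\in M)=m_{k-1}(\widehat{T}-u-v)$, gives the identity I would establish first,
\[
b_k(T)=m_k(\widehat{T})+(a^2-1)\,m_{k-1}(\widehat{T}-u-v),
\]
where $\widehat{T}-u-v$ is the forest on $n-2$ vertices obtained by deleting $u$ and $v$. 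For $P_n^*$, whose heavy edge is pendant, we have $\widehat{T}=P_n$ and $\widehat{T}-u-v=P_{n-2}$, so $b_k(P_n^*)=m_k(P_n)+(a^2-1)\,m_{k-1}(P_{n-2})$.

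Next I would invoke two monotonicity facts, both instances of ``the path maximizes matchings.'' \textbf{(i)} Among all trees on $n$ vertices, $P_n$ has the most $k$-matchings: $m_k(\widehat{T})\le m_k(P_n)$ for every $k$, with equality for all $k$ precisely when $\widehat{T}=P_n$; this is the unweighted statement underlying the cited remark that $P_n$ uniquely maximizes tree energy. \textbf{(ii)} Among all forests on $N$ vertices the path $P_N$ again maximizes $k$-matchings: for any graph $H$ and non-adjacent $x,y$ one has $m_k(H+xy)=m_k(H)+m_{k-1}(H-x-y)\ge m_k(H)$, so $m_k$ does not decrease under edge-addition; extending the forest $\widehat{T}-u-v$ to a tree on $n-2$ vertices and then applying (i) yields $m_{k-1}(\widehat{T}-u-v)\le m_{k-1}(P_{n-2})$ for every $k$. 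Since $a\ge 2$ makes $a^2-1>0$, adding the two bounds with their nonnegative coefficients gives $b_k(T)\le b_k(P_n^*)$ for all $k$, i.e. $T\preceq P_n^*$.

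Finally I would prove strictness by cases on why $T\ne P_n^*$. If $\widehat{T}\ne P_n$, fact (i) supplies an index $k$ with $m_k(\widehat{T})<m_k(P_n)$, and since the other summand still obeys $\le$ at that same $k$, we get $b_k(T)<b_k(P_n^*)$. If instead $\widehat{T}=P_n$ but $e_0$ is an interior edge, then $m_k(\widehat{T})=m_k(P_n)$ for all $k$ and the gap comes entirely from the second term: deleting the endpoints of an interior edge of $P_n$ leaves a disjoint union $P_p\cup P_q$ with $p,q\ge 1$ and $p+q=n-2$, which has strictly fewer $1$-matchings than the connected $P_{n-2}$ (namely $n-4$ edges versus $n-3$), so $b_2(T)<b_2(P_n^*)$. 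In either case $T\prec P_n^*$, whence $\mathbb{E}(T)<\mathbb{E}(P_n^*)$ by the Coulson-integral monotonicity; and $n=3$ is vacuous since every edge of $P_3$ is pendant. The main obstacle is precisely the interior-edge case, which is \emph{not} covered by fact (i): one must argue separately that the disconnected forest $\widehat{T}-u-v$ genuinely loses matchings to $P_{n-2}$, and this is exactly what the edge-addition identity delivers.
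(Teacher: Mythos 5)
Your proof is correct, but it takes a genuinely different route from the paper's. You decompose each coefficient as $b_k(T)=m_k(\widehat{T})+(a^2-1)\,m_{k-1}(\widehat{T}-u-v)$ by splitting the $k$-matchings according to the unique heavy edge $e_0=uv$ (the identity checks out, since a matching contributes $a^2$ or $1$ according to whether it uses $e_0$), and then you invoke the classical fact that $P_n$ maximizes all matching numbers among $n$-vertex trees, uniquely in the quasi-order sense, extended to forests via the edge-addition identity $m_k(H+xy)=m_k(H)+m_{k-1}(H-x-y)$; strictness then splits into two transparent cases: either $\widehat{T}\ne P_n$, where fact (i) gives a strict $m_k$, or $\widehat{T}=P_n$ with $e_0$ interior, where $\widehat{T}-u-v=P_p\cup P_q$ has $n-4$ edges against the $n-3$ of $P_{n-2}$, forcing $b_2(T)<b_2(P_n^*)$. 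The paper instead argues by induction on $n$: it deletes a pendent edge $uv$ of weight $1$, applies the recursion $b_k(T)=b_k(T-v)+b_{k-1}(T-v-u)$ together with the inductive relations $T-v\preceq P_{n-1}^*$ and $T-v-u\preceq P_{n-2}^*$ (patching the cases where $T-v-u$ is disconnected or lacks the heavy edge by adding edges or raising a weight), and obtains strictness through a delicate analysis of an exceptional tree $Q_n^*$, for which the pendent edge must be chosen so that $T-v\ne P_{n-1}^*$, with $Q_n^*$ itself handled via the disconnectedness of $T-v-u$. Your route buys brevity and locality --- strictness is pinpointed at an explicit coefficient rather than threaded through an induction, and no exceptional tree arises --- at the cost of importing the unweighted path-maximality theorem; that import is legitimate, as it is Gutman's result (reference [I] in the paper's bibliography) and is precisely the fact underlying the paper's own remark that $P_n$ uniquely maximizes tree energy, but you should cite the matching-count (quasi-order) version rather than the energy statement alone, since a strict energy inequality does not by itself yield a strict $b_k$; combined with $m_k(\widehat{T})\le m_k(P_n)$ for all $k$ it does, via the Coulson formula, exactly as you use it. By contrast, the paper's induction is self-contained within the weighted setting and showcases the pendent-edge recursion it reuses elsewhere.
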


\begin{proof}
 Let $T\in {\mathcal T}(n,m;a,1,\ldots,1)$ with $T\ne P_n^*$,.
We prove that $T\prec P_n^*$ by induction on $n$. 
If $n=3$, then the result is obviously true since there is only one graph in the set  ${\mathcal T}(3,m;a,1,\cdots,1)$.
If $n=4$, then the underlying unweighted graph of $T$ is either the star $K_{1,3}$ or the path $P_{4}$. In the former case, the result follows from Theorem \ref{th:new}. In the latter case, the result follows from Example \ref{ex:one}..

Now we assume that $n\ge 5$. Since $T$ contains at least two pendent edges, there exists a pendent edge $uv$, where $v$ is a  pendent vertex,  having weight 1. Then we have 

$$b_{k}(T)=b_{k}(T-v)+b_{k-1}(T-v-u) \quad (k\ge 1)$$
By the induction assumption, we have 
\begin{equation}\label{eq:new*}
T-v\preceq P^{*}_{n-1}.\end{equation}
Now we show that 
\begin{equation}\label{eq:new**}
T-v-u\preceq P^{*}_{n-2}.\end{equation}
If $T-v-u$ is connected, then (\ref{eq:new**}) holds by induction: if $T-v-u$ does not contain an edge of weight $a$, we can change the weight of one edge from 1 to $a$, and then use the induction assumption. If $T-v-u$ is not connected, we can add some edges to $T-v-u$ to reduce the proof to the connected case. Thus  (\ref{eq:new**}) holds in both cases.

Finally, we show that at least one of the two quasi-order relations (\ref{eq:new*}) and (\ref{eq:new**}) is strict.

Let $Q_{n}$ be the tree of order $n$ obtained from $P_{n-1}$ by adding a new pendent edge at a quasi-pendent vertex of $P_{n-1}$ (here a quasi-pendent vertex is vertex adjacent to some pendent vertex). Since $n\ge 5$,  $Q_{n}$ contains a unique pendent edge $\alpha $ which is not adjacent to any other pendent edge of $Q_{n}$. Let $Q_{n}^{*}$ be the weighted tree obtained from $Q_{n}$ by assigning weight $a$ to $\alpha $ and weights 1 to all other edges. 

If $T\not = Q_{n}^{*}$, then since $T\not = P_{n}^{*}$,  there exists at least one pendent edge $uv$ (with pendent vertex $v$) of $T$ with weight 1 such that $T-v\not =P^{*}_{n-1}$. If we take this pendent edge $uv$, then the quasi-order relation (\ref{eq:new*}) is strict by induction. 

If $T= Q_{n}^{*}$, then $T-v-u$ is not connected. So in this case the quasi-order relation (\ref{eq:new**}) is strict. 

This proves that at least one of the two quasi-order relations (\ref{eq:new*}) and (\ref{eq:new**}) is strict. Thus we have  $T\prec P^{*}_{n}$ completing the inductive proof of the theorem. 
 \end{proof}

\begin{corollary} Let $n\ge 4$,  let $m\ge n$, and let $a=m-n+2\ge 2$.
 The weighted path  $ P_n^*$
is the unique weighted tree in ${\mathcal T}(n,m;a,1,\ldots,1)$ with maximum  energy.
\end{corollary}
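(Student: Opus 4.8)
The plan is to obtain this corollary immediately from Theorem~\ref{th:gong}, combined with the monotonicity principle recorded just after the Coulson integral formula~(\ref{eq:coulson}): if $G_1$ and $G_2$ are weighted bipartite graphs of the same order, then $G_1\prec G_2$ implies $\mathbb{E}(G_1)<\mathbb{E}(G_2)$. Every member of ${\mathcal T}(n,m;a,1,\ldots,1)$ is a weighted tree on $n$ vertices, hence a weighted bipartite graph of order $n$, so this principle applies verbatim to any two such trees. In particular it applies to each competitor $T$ and to $P_n^*$.

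Concretely, first I would note that $P_n^*$ itself belongs to ${\mathcal T}(n,m;a,1,\ldots,1)$ by construction: its edge weights are $(a,1,\ldots,1)$ with $a=m-n+2$, and their sum over the $n-1$ edges of a path is $(m-n+2)+(n-2)=m$. Hence the class is nonempty and the proposed maximizer is a legitimate candidate. Next, for any $T\in {\mathcal T}(n,m;a,1,\ldots,1)$ with $T\ne P_n^*$, Theorem~\ref{th:gong} applies, since its hypotheses $n\ge 3$, $m\ge n$, $a\ge 2$ are implied by the corollary's hypotheses $n\ge 4$, $m\ge n$, $a\ge 2$; it yields $T\prec P_n^*$. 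Applying the monotonicity principle then gives $\mathbb{E}(T)<\mathbb{E}(P_n^*)$. Because this strict inequality holds for \emph{every} competitor $T\ne P_n^*$, the path $P_n^*$ is the unique weighted tree in the class of maximum energy, which is exactly the claim.

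I do not expect a genuine obstacle here: the entire combinatorial difficulty has already been absorbed into the inductive proof of Theorem~\ref{th:gong}, so the corollary is essentially a one-line deduction. The only points worth flagging are bookkeeping ones. First, invoking the Coulson-based monotonicity requires bipartiteness, which is automatic since trees are bipartite. Second, it is the \emph{strict} relation $\prec$---not merely $\preceq$---that Theorem~\ref{th:gong} delivers; this strictness is precisely what forces the energy inequality to be strict and hence what yields uniqueness of the maximizer rather than just that $P_n^*$ attains the maximum. No further case analysis is needed, as the $n=4$ base situation and all larger orders are already handled inside Theorem~\ref{th:gong} (via Theorem~\ref{th:new} and Example~\ref{ex:one} for the smallest cases).
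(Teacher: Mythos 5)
Your proposal is correct and matches the paper's intended argument exactly: the corollary is stated as an immediate consequence of Theorem~\ref{th:gong}, obtained by combining the strict quasi-order relation $T\prec P_n^*$ with the Coulson-based monotonicity principle $G_1\prec G_2\Rightarrow \mathbb{E}(G_1)<\mathbb{E}(G_2)$. Your bookkeeping remarks (bipartiteness of trees, strictness of $\prec$ yielding uniqueness) are accurate and require no further comment.
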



\section{The Case of $(0,1)$ Weights}

In this section we assume that the weights are 0 and 1 with $m$ 1s and $(n-1-m)$ 0s. In this case, ${\mathcal T}(n,m)$ is the set 
${\cal F}_n^k$ of forests with $n$ vertices and $k=n-m\ge 1$ connected components (trees). We will determine the (unique) forest in ${\cal F}_n^k$ with minimum energy and the  (unique) forest with maximum energy.

The following lemma is due to Gutman \cite{I}.

\begin{lemma}\label{lem:star} 
The energy of the star $S_n=K_{1,n-1}$   equals $2\sqrt{n-1}$, and $S_n$ is the unique tree of   minimum energy among  all trees with $n$ vertices.
\end{lemma}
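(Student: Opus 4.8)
The plan is to recognize this as the all-weights-one specialization of the minimum-energy machinery already developed in Section~2, and to supply a short self-contained argument using the quasi-order ``$\preceq$''. A tree on $n$ vertices here has all $n-1$ of its edges carrying weight $1$, so everything reduces to the $m=n-1$ case.

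First I would compute $\mathbb{E}(S_n)$ directly. Since the adjacency matrix of $S_n=K_{1,n-1}$ has rank $2$, exactly $n-2$ of its eigenvalues vanish, and by Lemma~\ref{lem:one} we have $b_1(S_n)=\sum_{e\in E(S_n)}w(e)^2=n-1$ because every edge has weight $1$. Hence the characteristic polynomial is $\phi(S_n,x)=x^n-(n-1)x^{n-2}$, whose nonzero roots are $\pm\sqrt{n-1}$. Therefore $\mathbb{E}(S_n)=2\sqrt{n-1}$, which also matches the value $2\sqrt{mr+tr+t}$ of Theorem~\ref{th:oneb} at $m=n-1$, $r=1$, $t=0$.

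Next I would establish minimality. Let $T$ be any tree on $n$ vertices, regarded as a weighted bipartite graph with all edge weights equal to $1$. Then $b_0(T)=1=b_0(S_n)$, and by Lemma~\ref{lem:one} we get $b_1(T)=n-1=b_1(S_n)$, since both $T$ and $S_n$ have exactly $n-1$ edges of weight $1$. Because the rank-$2$ computation above gives $b_k(S_n)=0$ for all $k\ge 2$, while $b_k(T)\ge 0$ always, we obtain $S_n\preceq T$, and the Coulson-integral fact quoted in the introduction yields $\mathbb{E}(S_n)\le\mathbb{E}(T)$.

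For uniqueness, suppose $T\ne S_n$. Then $T$ is not a star, so it contains two vertex-disjoint edges, i.e., a $2$-matching; consequently $b_2(T)>0=b_2(S_n)$. Thus $S_n\prec T$, and hence $\mathbb{E}(S_n)<\mathbb{E}(T)$, proving that $S_n$ is the unique tree of minimum energy. There is no genuine obstacle: the argument is precisely the $b_2$-separation used in the proof of Theorem~\ref{th:one}, specialized to uniform weight $1$. The only point worth verifying is that every edge of a tree on $n$ vertices indeed has weight $1$, which holds because such a tree has exactly $n-1$ edges.
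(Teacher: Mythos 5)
Your proof is correct. The paper itself gives no proof of this lemma: it is quoted as a known result of Gutman \cite{I}, so there is no internal argument to compare against. That said, your argument is exactly the one the paper's own machinery suggests: it is the specialization of Theorem \ref{th:one} (and Theorem \ref{th:oneb}) to $m=n-1$, where ``weights as equal as possible'' forces every edge weight to be $1$, so the weighted star $S$ of that theorem is precisely $S_n$ and the class $\mathcal{T}(n,n-1)$ restricted to uniform weight $1$ is the class of all simple trees on $n$ vertices. Your three steps --- the rank-$2$ computation giving $b_k(S_n)=0$ for $k\ge 2$ and $\mathbb{E}(S_n)=2\sqrt{n-1}$, the equality $b_1(T)=n-1$ for every tree via Lemma \ref{lem:one}, and the $b_2$-separation $b_2(T)>0$ for any non-star --- reproduce the Case~2 argument in the proof of Theorem \ref{th:one}, and the conclusion via the Coulson-integral monotonicity of $\mathbb{E}$ in the $b_k$'s is valid since trees are bipartite. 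What your write-up buys is self-containedness: the lemma need not be imported from \cite{I}, since it follows from results already proved in Section~2. No gaps.
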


The following lemma contains an elementary inequality.

\begin{lemma}\label{lem:sqrt}
Let $a_1,a_2,\ldots,a_k$  be nonnegative real numbers. Then
\[\sqrt{a_1}+\sqrt{a_2}+\cdots +\sqrt{a_k}\ge \sqrt{a_1+a_2+\cdots+a_k},\]
with equality if and only if at most one of $a_1,a_2,\ldots,a_k$ is nonzero.
\end{lemma}


\begin{theorem}\label{th:for1}
The forest $F_n^k= (K_{1,n-k}, K_1,\ldots, K_1)$ with $(k-1)$ isolated vertices is the unique forest in ${\mathcal F}_n^k$ with  minimum energy $2\sqrt{n-k}$.
\end{theorem}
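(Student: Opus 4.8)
The plan is to prove the theorem in two stages, corresponding to the two claims: that $F_n^k$ has energy exactly $2\sqrt{n-k}$, and that it is the \emph{unique} minimizer in ${\mathcal F}_n^k$. The energy value is immediate: the component $K_{1,n-k}$ is a star on $n-k+1$ vertices, so by Lemma \ref{lem:star} its energy is $2\sqrt{(n-k+1)-1}=2\sqrt{n-k}$, while each of the $k-1$ isolated vertices $K_1$ contributes $0$. Since energy is additive over connected components, $\mathbb{E}(F_n^k)=2\sqrt{n-k}$.

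For the uniqueness and minimality, let $F\in{\mathcal F}_n^k$ be arbitrary with connected components (trees) $T_1,\ldots,T_k$ on $n_1,\ldots,n_k$ vertices, where $\sum_i n_i=n$. The key observation is that each $T_i$ is an ordinary unweighted tree on $n_i$ vertices, so by Lemma \ref{lem:star} we have $\mathbb{E}(T_i)\ge 2\sqrt{n_i-1}$, with equality exactly when $T_i$ is the star (or when $n_i=1$, in which case $T_i=K_1$ contributes $0=2\sqrt{0}$). By additivity of energy over components,
\[
\mathbb{E}(F)=\sum_{i=1}^k \mathbb{E}(T_i)\ge \sum_{i=1}^k 2\sqrt{n_i-1}=2\sum_{i=1}^k\sqrt{n_i-1}.
\]
Now I would apply Lemma \ref{lem:sqrt} with $a_i=n_i-1$ to get $\sum_{i=1}^k\sqrt{n_i-1}\ge\sqrt{\sum_{i=1}^k(n_i-1)}=\sqrt{n-k}$, yielding $\mathbb{E}(F)\ge 2\sqrt{n-k}$ and establishing minimality.

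The main work is the equality analysis, which pins down uniqueness. Equality in the first chain forces each $T_i$ to be a star (including the degenerate case $K_1$), and equality in Lemma \ref{lem:sqrt} forces at most one of the values $n_i-1$ to be nonzero; that is, at most one component has more than one vertex. Combining these two equality conditions, all but one of the trees must be single vertices $K_1$, and the remaining tree must be a star on all $n-(k-1)=n-k+1$ vertices, namely $K_{1,n-k}$. Hence the unique minimizer is exactly $F_n^k=(K_{1,n-k},K_1,\ldots,K_1)$. The one subtlety to handle carefully is the degenerate components: when $n_i=1$ the bound $\mathbb{E}(T_i)\ge 2\sqrt{n_i-1}$ holds with equality automatically, so these do not obstruct the equality cases, and the two equality conditions interlock cleanly to force the claimed structure. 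I do not anticipate a genuine obstacle here; the only care needed is to verify that $k\le n$ guarantees $n-k\ge 0$ so that $K_{1,n-k}$ and the radicals are well defined.
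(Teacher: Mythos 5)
Your proposal is correct and follows essentially the same route as the paper's own proof: apply Lemma \ref{lem:star} componentwise, sum over components, apply Lemma \ref{lem:sqrt} with $a_i=n_i-1$, and read off uniqueness from the combined equality conditions. Your explicit handling of the degenerate $K_1$ components is a welcome touch of care that the paper's terser equality statement leaves implicit, but it is not a different argument.
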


\begin{proof}
Let $T_1,T_2,\ldots,T_k$ be the connected components of a forest $F\in {\mathcal F}_n^k$ where $T_i$ has order $n_i$ $(i=1,2,\ldots,k)$.
Then
\begin{eqnarray*}
E(F)=\sum_{i=1}^k E(T_i)&\ge& \sum_{i=1}^k E(S_{n_i}) \quad \mbox{ (by Lemma \ref{lem:star})}\\
&=& 2\sum_{i=1}^k \sqrt{n_i-1}\\
&\ge & 2\sqrt{\sum_{i=1}^k (n_i-1)}\quad \mbox{ (by Lemma \ref{lem:sqrt})}\\
&=& 2\sqrt{n-k}\\
&=& E(F_n^k).\end{eqnarray*}
By Lemmas \ref{lem:star} and  \ref{lem:sqrt}, $E(F)=E(F_n^k)$ if and only if $F=F_n^k$.
\end{proof}

We now consider the maximum energy of forests in ${\mathcal F}_n^k$. It turns out there are two cases to consider according to whether $2k$ is larger or smaller than $n$. In both cases we identify the forest with maximum energy.

Denoting again a  path with $n$ verticses by  $P_n$.
we have the  following lemma proved in \cite{IS}.

\begin{lemma}\label{lem:path} 
For $n\ge 3$,
\[P_k\cup P_{n-k}\prec P_2\cup P_{n-2} \quad (k\ne 2,n-2).\]
\end{lemma}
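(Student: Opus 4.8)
The plan is to compare the matching-generating coefficients $b_k$ of the two unweighted forests $P_k \cup P_{n-k}$ and $P_2 \cup P_{n-2}$ directly, since these are simple (unit-weight) bipartite graphs and the quasi-order $\preceq$ is defined precisely through these coefficients. Because all weights are $1$, the number $b_j(G)$ is just the count of $j$-matchings of $G$, which I will denote $m(G,j)$. For a disjoint union the generating polynomial factors, so I would work with the matching polynomials $M(G,x) = \sum_j m(G,j)\, x^j$ and exploit the identity $M(G_1 \cup G_2, x) = M(G_1,x)\,M(G_2,x)$. Thus the claim $P_k \cup P_{n-k} \prec P_2 \cup P_{n-2}$ reduces to showing the coefficientwise inequality $M(P_k,x)M(P_{n-k},x) \preceq M(P_2,x)M(P_{n-2},x)$ with strict inequality in at least one coefficient, for $k \neq 2, n-2$ (and by symmetry I may assume $2 \le k \le n/2$).

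The key engine is the standard recurrence for path matching numbers, $M(P_n,x) = M(P_{n-1},x) + x\,M(P_{n-2},x)$, equivalently the deletion relation $m(P_n,j) = m(P_{n-1},j) + m(P_{n-2},j-1)$ coming from whether the terminal edge is used in a matching; in fact $m(P_n,j) = \binom{n-j}{j}$ explicitly, which I may invoke if a closed form is cleaner. The main step is to establish the single ``base'' comparison $P_3 \cup P_{n-3} \prec P_2 \cup P_{n-2}$ and then bootstrap. A clean way to organize this is to prove a monotonicity statement: first I would show that as $k$ increases from $2$ toward $n/2$, the products $M(P_k,x)M(P_{n-k},x)$ form a $\prec$-decreasing chain, so the extreme value $k=2$ dominates all interior values. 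Concretely I would fix the total $n$ and compare consecutive terms $M(P_k,x)M(P_{n-k},x)$ versus $M(P_{k-1},x)M(P_{n-k+1},x)$, reducing the difference of the two products to a manageable expression via the recurrence above.

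To carry out that consecutive comparison I would compute the coefficientwise difference
\[
M(P_{k-1},x)M(P_{n-k+1},x) - M(P_k,x)M(P_{n-k},x),
\]
substitute $M(P_{n-k+1},x) = M(P_{n-k},x) + x\,M(P_{n-k-1},x)$ and $M(P_k,x) = M(P_{k-1},x) + x\,M(P_{k-2},x)$, and watch the leading cross terms cancel. After cancellation the difference should collapse to $x$ times a product of two path matching polynomials (something of the shape $x\bigl(M(P_{k-1},x)M(P_{n-k-1},x) - M(P_{k-2},x)M(P_{n-k},x)\bigr)$), which has nonnegative coefficients because each $m(P_a,j)$ is nonnegative and the known log-concavity/monotonicity of the $\binom{n-j}{j}$ pattern forces the right sign. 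Establishing that this residual expression is coefficientwise nonnegative, and strictly positive in at least one coefficient, is exactly the heart of the argument and can itself be set up as an induction on $n$ (or proved directly from the binomial formula).

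The hard part, and where I expect the real obstacle, is controlling the sign of that residual coefficientwise difference of products of path polynomials. Products of two $\preceq$-comparable sequences need not stay comparable in general, so I cannot simply multiply inequalities; I must genuinely use structural facts about path matchings (the explicit formula $m(P_n,j)=\binom{n-j}{j}$, or its log-concavity, or a ``majorization'' of how $n$ splits). If a direct coefficient estimate gets unwieldy, my fallback is to prove the inequality in the explicit binomial form: reduce $P_k \cup P_{n-k} \preceq P_2 \cup P_{n-2}$ to the Vandermonde-type statement that for each $j$,
\[
\sum_{i} \binom{k-i}{i}\binom{(n-k)-(j-i)}{j-i} \le \sum_{i}\binom{2-i}{i}\binom{(n-2)-(j-i)}{j-i},
\]
and then compare the two sides term by term, again leaning on the convexity of binomial coefficients with the split $\{2, n-2\}$ being the most ``unbalanced'' admissible one. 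Either route funnels the whole proof into this one combinatorial inequality about how balanced versus unbalanced splits affect the total matching count.
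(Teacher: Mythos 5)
The paper never proves this lemma itself; it quotes it from reference [IS] (Gutman--Shao), so your proposal can only be judged on its own merits. Your toolkit is right (matching polynomials, multiplicativity over disjoint unions, the recurrence $M(P_n,x)=M(P_{n-1},x)+xM(P_{n-2},x)$), and your predicted cancellation is exactly correct: writing $\Phi_a=M(P_a,x)$, one gets
\[
\Phi_{k-1}\Phi_{n-k+1}-\Phi_k\Phi_{n-k}
\;=\;x\bigl(\Phi_{k-1}\Phi_{n-k-1}-\Phi_{k-2}\Phi_{n-k}\bigr).
\]
But here is the genuine gap: the bracket on the right is the same type of difference with the \emph{orientation reversed}, so iterating this relation flips the sign at every step. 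Combined with the base computation $\Phi_1\Phi_{m-1}-\Phi_2\Phi_{m-2}=-x^2\Phi_{m-4}$, it yields the identity $\Phi_k\Phi_{n-k}-\Phi_{k+1}\Phi_{n-k-1}=(-1)^k x^{k+1}\Phi_{n-2k-2}$, whose sign alternates with the parity of $k$. Consequently your key structural claim --- that the products $M(P_k,x)M(P_{n-k},x)$ form a $\prec$-decreasing chain as $k$ goes from $2$ toward $n/2$ --- is false, and the residual you assert to be coefficientwise nonnegative (``log-concavity forces the right sign'') is negative for every odd $k$. Concrete counterexample: for $n=8$, $M(P_3,x)M(P_5,x)=1+6x+11x^2+6x^3$ while $M(P_4,x)^2=1+6x+11x^2+6x^3+x^4$, so $P_3\cup P_5\prec P_4\cup P_4$, the opposite of what your monotone chain requires. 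Your proposed induction therefore fails at its very first odd step, precisely at the point you yourself flagged as the ``heart of the argument.''

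The fallback heuristic is also off: the ordering of the splits is governed by parity, not by convexity or how unbalanced the split is. The true picture is that even splits decrease, $P_2\cup P_{n-2}\succ P_4\cup P_{n-4}\succ\cdots$, odd splits increase, $P_1\cup P_{n-1}\prec P_3\cup P_{n-3}\prec\cdots$, and every odd split lies below every even one; note that the \emph{most} unbalanced nontrivial split $P_1\cup P_{n-1}$ is the minimum, not near the maximum, and that $P_n=P_0\cup P_n$ actually beats $P_2\cup P_{n-2}$ (which is why $k\ge 1$ is implicit in the lemma). The repair is short once you have the alternating identity above: telescope $M(P_2,x)M(P_{n-2},x)-M(P_k,x)M(P_{n-k},x)=\sum_{j=2}^{k-1}(-1)^j x^{j+1}\Phi_{n-2j-2}$ and absorb consecutive pairs via the recurrence, e.g.\ $x^{j+1}\Phi_{n-2j-2}-x^{j+2}\Phi_{n-2j-4}=x^{j+1}\Phi_{n-2j-3}$, which is manifestly coefficientwise nonnegative and strictly positive in some coefficient when $k\notin\{2,n-2\}$. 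With that replacement for your false monotonicity step, your overall strategy does deliver the lemma.
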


\begin{theorem}\label{th:max1}
If $2k\ge n$, the forest  $M_n^k=(P_2,\ldots,P_2,K_1,\ldots,K_1) \in {\cal F}_n^k$, where  $P_2$ occurs $(n-k)$ times and  $K_1$ occurs $(2k-n)$ times, is the unique forest in ${\mathcal F}_n^k$ with maximum energy, and its energy is $2(n-k)$. 
\end{theorem}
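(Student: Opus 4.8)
The plan is to prove that among all forests in $\mathcal{F}_n^k$ with $2k\ge n$, the forest $M_n^k$ consisting of $(n-k)$ copies of $P_2$ and $(2k-n)$ copies of $K_1$ uniquely maximizes the energy. First I would verify the claimed energy value: each $P_2$ contributes energy $2$ (eigenvalues $\pm 1$) and each $K_1$ contributes $0$, so $\mathbb{E}(M_n^k)=2(n-k)$, which establishes the extremal value. The heart of the argument is showing that any other forest $F\in\mathcal{F}_n^k$ satisfies $\mathbb{E}(F)<2(n-k)$, and since all forests are bipartite I would work through the quasi-order ``$\preceq$'' and the fact that $G_1\prec G_2$ implies $\mathbb{E}(G_1)<\mathbb{E}(G_2)$.

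The key structural observation is that $2k\ge n$ forces every forest in $\mathcal{F}_n^k$ to have at least $2k-n$ isolated vertices, or more precisely limits how many components can have $\ge 2$ vertices. Since a forest on $n$ vertices with $k$ components has exactly $n-k$ edges, and $M_n^k$ achieves energy equal to twice its number of edges, the claim is essentially that $M_n^k$ is the unique forest attaining the theoretical maximum $\mathbb{E}(F)\le 2\cdot(\text{number of edges})$. The natural strategy is therefore to reduce to the forest result already available: by Theorem~\ref{th:for1} (the earlier maximum-energy theorem for weighted forests, the one asserting $\mathbb{E}(F)\le 2m$ with equality iff every component is $K_1$ or $K_2$), a $(0,1)$-weighted forest here has all components $K_1$ or $K_2$ precisely when $\mathbb{E}(F)=2(n-k)$. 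Thus the steps I would carry out are: (i) note $F\in\mathcal{F}_n^k$ is an (unweighted) forest with $n-k$ edges, so it lies in the weighted-forest class with total weight $m=n-k$; (ii) apply the $\mathbb{E}(F)\le 2(n-k)$ bound with equality iff all components are $K_1$ or $K_2$; (iii) observe that the only such forest with exactly $k$ components in $\mathcal{F}_n^k$ is $M_n^k$ itself, using the counting constraint that $(n-k)$ edges among components of size $\le 2$ forces exactly $(n-k)$ copies of $P_2$ and hence $(2k-n)$ copies of $K_1$.

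I expect the main obstacle to be establishing the uniqueness cleanly and handling the counting carefully: one must confirm that a forest all of whose components are $K_1$ or $K_2$, having $n$ vertices, $k$ components, and $n-k$ edges, is forced to have exactly the multiplicities prescribed by $M_n^k$. If $p$ denotes the number of $K_2$ components and $q$ the number of $K_1$ components, then $p+q=k$ and $2p+q=n$, which solves uniquely to $p=n-k$ and $q=2k-n$; the condition $2k\ge n$ is exactly what guarantees $q\ge 0$, so that such a forest exists and is unique up to isomorphism. A subtlety worth flagging is whether Theorem~\ref{th:for1} as cited (which is stated for positively weighted forests via reference~\cite{SHGD}) applies verbatim to the $(0,1)$ setting; since every component here has weight-$1$ edges, the bound $\mathbb{E}(F)<2m$ for any component that is not $K_1$ or $K_2$ transfers directly, so this causes no real difficulty. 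Alternatively, if one prefers to avoid invoking the earlier theorem, one can argue directly using Lemma~\ref{lem:path}: any component that is a path $P_{n_i}$ with $n_i\ge 3$ can be compared via $\preceq$ against splitting it into $P_2\cup P_{n_i-2}$, strictly increasing the $b_k$ values and hence the energy, and iterating drives every component down to $P_2$ or $K_1$, which is the case covered by $M_n^k$.
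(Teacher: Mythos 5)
Your main argument is correct and is essentially the paper's own proof: the paper likewise verifies $\mathbb{E}(M_n^k)=2(n-k)$ and then invokes the bound from \cite{SHGD} (the earlier weighted-forest theorem, $\mathbb{E}(F)\le 2m$ with equality iff every component is $K_1$ or $K_2$), viewing each edge of $F$ as having weight one; your explicit counting step ($p+q=k$, $2p+q=n$, hence $p=n-k$, $q=2k-n\ge 0$) merely spells out the uniqueness that the paper leaves implicit, and your remark that the weighted bound transfers verbatim to the $(0,1)$ setting is right.

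One caution about your proposed alternative, however: the fallback via Lemma \ref{lem:path} does not work as stated. Splitting a component $P_{n_i}$ ($n_i\ge 3$) into $P_2\cup P_{n_i-2}$ removes an edge, so $b_1$ strictly decreases and in fact $P_2\cup P_{n_i-2}\prec P_{n_i}$ --- the inequality points in the opposite direction from what you claim --- and moreover the operation increases the number of components, taking you out of ${\mathcal F}_n^k$. Lemma \ref{lem:path} compares two different two-component splittings of the same total order, and the paper uses it only in Theorem \ref{th:max2}, where two non-$P_2$ components $T_1,T_2$ are replaced by the pair $(P_2,P_{n_1+n_2-2})$ so that both the vertex count and the component count are preserved. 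Since you offer this only as an optional alternative and your primary route is sound, the proposal as a whole stands, but the alternative should be deleted or replaced by the pairwise-replacement argument.
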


\begin{proof}
It is easy to see that  $\mathbb {E}(M_n^k)=2(n-k)$. 
On the other hand, if $F \in {\cal F}_n^k$ is not $M_n^k$, then from \cite{SHGD} we know that $\mathbb {E}(F)< 2(n-k)$ since $F$  contains $(n-k)$ edges (we can view that each edge has weight one). This prove the desired result.
\end{proof}

\begin{theorem}\label{th:max2}
If $2k< n$, the forest  $P_n^k=(P_2,\ldots,P_2,P_{n-2k+2}) \in {\cal F}_n^k$, where  $P_2$  occurs $(k-1)$ times, is the unique forest in ${\mathcal F}_n^k$ with maximum energy.
\end{theorem}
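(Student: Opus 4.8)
The plan is to prove the claim by induction on $k$, using the quasi-order machinery developed in the earlier sections together with Lemma \ref{lem:path} as the engine that forces the path structure. For a forest $F=(T_1,\ldots,T_k)\in{\mathcal F}_n^k$, recall that $b_r(F)$ is multiplicative-additive across components in the sense that the generating polynomial $\sum_r b_r(F)x^{2r}$ factors as the product of the component polynomials; consequently, if one replaces a single component $T_i$ by a component $T_i'$ of the same order with $T_i\preceq T_i'$ (respectively $T_i\prec T_i'$), then $F\preceq F'$ (respectively $F\prec F'$), where $F'$ is the forest with $T_i$ swapped for $T_i'$. I would first record this component-replacement monotonicity as the basic reduction tool, since it lets me optimize one component at a time while holding the others fixed.

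First I would reduce to the connected case for a single component. Among all trees on a fixed number of vertices, the path maximizes energy (equivalently is maximal in $\preceq$), a fact quoted in the Introduction; so in a maximum-energy forest each component must itself be a path. Thus it suffices to show that among all partitions of the $n$ vertices into $k$ paths $P_{n_1}\cup\cdots\cup P_{n_k}$ (with $\sum n_i=n$, each $n_i\ge 1$), the configuration $(P_2,\ldots,P_2,P_{n-2k+2})$ is the unique maximizer in the quasi-order. Here I would use Lemma \ref{lem:path}: whenever two components have orders $n_i,n_j$ with their union not already in the form $P_2\cup P_{(\text{rest})}$, I can replace the pair $P_{n_i}\cup P_{n_j}$ on the same $n_i+n_j$ vertices by $P_2\cup P_{n_i+n_j-2}$ and strictly increase $b_r$ for some $r$. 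Iterating this pairwise exchange drives every component except one down to $P_2$, while the exchanges can only increase the quasi-order value; since $2k<n$, after peeling off $(k-1)$ copies of $P_2$ there remain $n-2(k-1)=n-2k+2\ge 3$ vertices forming the single long path $P_{n-2k+2}$.

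The main obstacle, and the step I would handle most carefully, is making the iterative exchange argument both terminating and strictly improving, so that it yields uniqueness rather than just maximality. A clean way to package this is to induct on $k$: the base case $k=1$ is the classical fact that $P_n$ uniquely maximizes energy among trees on $n$ vertices. For the inductive step, take any $F\in{\mathcal F}_n^k$ that is maximal in $\preceq$; by the path reduction each component is a path, and I claim at least one component is $P_2$. If no component were $P_2$ (so every $n_i\ge 1$ with at least two $n_i\ge 3$, or one component absorbing everything), Lemma \ref{lem:path} applied to a pair of non-$P_2$ components produces a strict improvement, contradicting maximality; the one-vertex components $K_1=P_1$ must be handled separately by noting that merging a $P_1$ into another path (which is legitimate only while respecting the component count) is governed by the same lemma. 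Once a $P_2$ component is isolated, removing it leaves a forest in ${\mathcal F}_{n-2}^{k-1}$ with $2(k-1)<n-2$, and the induction hypothesis forces the remaining components to be $(P_2,\ldots,P_2,P_{n-2k+2})$.

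The delicate bookkeeping is ensuring the exchange never creates an isolated vertex when $2k<n$ (which is exactly why this hypothesis, rather than $2k\ge n$, governs this case) and confirming that the strictness in Lemma \ref{lem:path} propagates through the component-replacement monotonicity to give $F\prec P_n^k$ for every $F\ne P_n^k$. I would therefore state and prove the monotonicity lemma with its strict version explicitly, then invoke it at each exchange; the combination of that strictness with the inductive descent yields the uniqueness assertion, and the proof closes without needing an explicit formula for $\mathbb{E}(P_n^k)$, since the conclusion is phrased purely in terms of the quasi-order and energy monotonicity.
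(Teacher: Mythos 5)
Your proposal is correct and follows essentially the same route as the paper's own proof: per-component path maximality plus the pairwise exchange of Lemma \ref{lem:path}, with strictness propagated through the component-wise factorization of the matching polynomial. Your induction on $k$ (together with the explicit component-replacement monotonicity lemma and the additivity of energy over components) is just a repackaging of the paper's direct exchange argument, though it has the minor merit of making explicit the $K_1$ bookkeeping and the final uniqueness step that the paper leaves implicit.
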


\begin{proof}
Let $T_1,T_2,\ldots,T_k$ be the connected components of a forest $F\in {\mathcal F}_n^k$ where $T_i$ has order $n_i$ $(i=1,2,\ldots,k)$.
Suppose e.g. neither $T_1$ nor $T_2$ equals $P_2$.
Since   a path is the unique graph with  maximum energy among all trees with the same number of vertices,  and using Lemma \ref{lem:path} we get
\[(T_1,T_2)\preceq (P_{n_1},P_{n_2})\preceq (P_2,P_{n_1+n_2-2})\]
with at least one of $\preceq$ equal to $\prec$. Thus, we may replace $T_1$ and $T_2$ in $F$ with $P_2$ and $P_{n_1+n_2-2}$ and obtain a forest in ${\mathcal F}_n^k$ with larger energy.  Hence $F$ is not a forest of maximum energy in ${\mathcal F}_n^k$. It follows that a forest in ${\mathcal F}_n^k$ with maximum energy has $(k-1)$ 
components equal to $P_2$. 
\end{proof}

\bibliographystyle{plain}

\end{document}